\newcommand{\msc}[2][2000]{%
  \let\@oldtitle\@title%
  \gdef\@title{\@oldtitle\footnotetext{#1 \emph{Mathematics subject
        classification.} #2}}%
}
\theoremstyle{plain} \newtheorem{theorem}{Theorem} [section]
\newtheorem{lemma}[theorem]{Lemma}
\newtheorem{proposition}[theorem]{Proposition}
 \theoremstyle{remark}
\def\R{{\mathbb R}}
\def\Sch{{\mathcal S}}
\def\Sph{{\mathbb S}}
\def\O{\mathcal O}
\def\F{\mathcal F}  
\def\({\left(}
\def\){\right)}
\def\<{\left\langle}
\def\>{\right\rangle}
\def\d{{\partial}} \def\eps{\varepsilon} 
\def\si{{\sigma}}  
\DeclareMathOperator{\RE}{Re} \DeclareMathOperator{\IM}{Im}
\numberwithin{equation}{section}
\begin{document}

\title[Logarithmic fractional Schr\"odinger equation]{On the Cauchy problem for logarithmic fractional Schr\"odinger equation}
\author[R. Carles]{R\'emi Carles}
\author[F. Dong]{Fangyuan Dong}
\address{Univ Rennes, CNRS\\ IRMAR - UMR
  6625\\ F-35000 Rennes, France}
\email{Remi.Carles@math.cnrs.fr}
\email{dfangyuan20@163.com}

\begin{abstract}
  We consider the fractional Schr\"odinger equation with a logarithmic
  nonlinearity, when the power of the Laplacian is between zero and
  one. We prove global existence results in three different functional
  spaces: the Sobolev space corresponding to the quadratic form domain
  of the fractional Laplacian, the energy space, and a space contained
  in the operator domain of the fractional Laplacian. For this last
  case, a finite momentum assumption is made, and the key step
  consists in estimating the Lie commutator between the fractional
  Laplacian and the multiplication by a monomial. 
\end{abstract}
\thanks{Partially supported by Centre Henri Lebesgue, program
ANR-11-LABX-0020-0. The second author is on leave from the University
of Science and Technology in Beijing, thanks to
some funding from the Chinese Scholarship Council. A CC-BY public
copyright license has been applied by the authors to the present
document and will be applied to all subsequent versions up to the
Author Accepted Manuscript arising from this submission. }  
\maketitle

\section{Introduction} 
We consider the logarithmic Schr\"odinger equation
\begin{equation}\label{eq:logNLS} 
i\d_t u -(-\Delta)^s u =\lambda \log\(|u|^2\)u\, ,\quad u_{\mid t=0} =u_0 \, , 
\end{equation} 
where $0 < s < 1$, $u = u(t,x)$ represents a complex-valued function defined on $(t,x) \in \mathbb{R} \times \mathbb{R}^d$, with $d\geq 1$. The fractional Laplacian $(-\Delta)^s$ is defined through the Fourier transform as follows:
\begin{equation}
    \mathcal{F}\left[(-\Delta)^s u\right](\xi) = |\xi|^{2s} \mathcal{F} u(\xi),
\end{equation}
where the Fourier transform is given by
\begin{equation}
   \mathcal{F} u(\xi) = \frac{1}{(2\pi)^{d/2}} \int_{\mathbb{R}^d} u(x) e^{-i \xi \cdot x} \,dx. 
\end{equation}

The fractional Laplacian $(-\Delta)^s$ is a self-adjoint operator
acting on the space $L^2\(\mathbb{R}^d\)$, characterized by a
quadratic form domain $H^s\(\mathbb{R}^d\)$ and an operator
domain $H^{2 s}\(\mathbb{R}^d\)$. The nonlocal operator
$(-\Delta)^s$ serves as the infinitesimal generators in the context of
Lévy stable diffusion processes, as outlined in
\cite{applebaum2004levy}. Fractional derivatives of the Laplacian have
applications in numerous equations in mathematical physics and related
disciplines, as proposed in \cite{Laskin2000,Laskin2002} in the case
of linear Schr\"odinger equations; see also
\cite{applebaum2004levy,david2004levy,guo2006some} and the associated
references. Recently, there has been a strong focus on studying
mathematical problems related to the fractional Laplacian purely from
a mathematical perspective. Regarding specifically fractional
nonlinear Schr\"odinger equations, important progress has been made in
e.g. \cite{bhattarai2016existence,cabre2014nonlinear,cho2013cauchy,CHHO14,Dinh2018,guo2008existence,guo2012existence,guo2010global}. 


The problem \eqref{eq:logNLS} does not seem to have physical
motivations (so far), and was introduced in \cite{d2015fractional} as
a generalization of the 
case $s=1$, introduced in \cite{BiMy76}, and proposed in different physical
contexts since (see e.g.
\cite{hefter1985application,Zlo10}). Note also
that the logarithmic nonlinearity may be obtained as the limit of an
homogeneous nonlinearity  $\lambda|u|^{2\si}u$ when $\si$ goes to
zero, at least when ground states are considered (case $\lambda<0$;
see \cite{WangZhang2019} for $s=1$, \cite{AnYang2023} in the
fractional case).

In \cite{Ardila2017}, the author addresses the nonlinear fractional
logarithmic Schr\"odinger equation \eqref{eq:logNLS} with
$\lambda=-1$ and $d\ge 2$, employing a compactness method to establish a unique
global solution for the associated Cauchy problem within a suitable
functional framework, inspired by \cite{CaHa80} (for the logarithmic
nonlinearity) and \cite{CHHO14} (for the fractional Laplacian). In
\cite{ZHANG2018}, the author investigate the 
existence of a global weak solution to the problem \eqref{eq:logNLS}
in the case of $\lambda=-1$, when the space variable $x$ belongs to
some smooth bounded domain, by using a combination of potential wells
theory and the Galerkin method.  In this paper, we
complement the approach from \cite{Ardila2017,ZHANG2018} by adapting
the strategy employed in \cite{HO} in the case of the standard
Laplacian, $s=1$.
\smallbreak

Formally, \eqref{eq:logNLS} enjoys the the conservations of mass,
angular momentum, and energy:
\begin{align} 
  & M(u(t))=\|u(t)\|_{L^2(\R^d)}^2,\notag \\
  & J(u(t))=
\IM\int_{\R^d}\bar u(t,x)\nabla u(t,x)dx,\notag\\ 
  &E(u(t))=\frac{1}{2}\|(-\Delta)^{s/2} u(t)\|_{L^2(\R^d)}^2+
    \frac{\lambda}{2}\int_{\R^d}
|u(t,x)|^2\(\log|u(t,x)|^2-1\)dx.\label{eq:energy}
\end{align} 
The energy is well-defined in the subset of $H^{s}\(\mathbb{R}^{d}\)$,
\begin{equation*} 
W_1^s:= \Big \{ u\in H^s(\R^d) \, , \,  x\mapsto
|u(x)|^2\log |u(x)|^2\in L^1(\R^d)\Big\} \, . 
\end{equation*}
When $s=1$, Hayashi and Ozawa \cite{HO} revisit the Cauchy problem
for the logarithmic Schr\"odinger equation, constructing strong
solutions in both $H^1$ and $W_1=W_1^1$. This approach deliberately
avoids relying on 
compactness arguments, demonstrating the convergence of a sequence of
approximate solutions in a complete function space. The authors in
\cite{HO}   also address the existence in the $H^2$-energy space, as
discussed below. 
\smallbreak

The main contributions  of this paper can be
summarized as follows:\\
1. Construction of $H^s$ strong solutions, without relying on the
conservation of the energy.\\ 
2. Construction of solutions in the energy space $W_1^s$.\\
3. The higher $H^{2s}$ regularity is established, by assuming some
further spatial decay 
of the initial data.

In all cases, no sign assumption is made on $\lambda\in \R$.

\begin{theorem}\label{theo:Hs}
    Let $\lambda \in \mathbb{R} $ and $0<s<1$. For any $\varphi \in H^{s}\(\mathbb{R}^{d}\)$, there exists a unique solution  $C\(\mathbb{R}, H^{s}\(\mathbb{R}^{d}\)\)$ to \eqref{eq:logNLS} in the sense of 
\begin{equation}\label{eq:logNLS in H^-s}
    i \d_{t} u-(-\Delta)^s u=\lambda  \log \(|u|^{2}\)u \quad \text { in } H^{-s}(\Omega)
\end{equation}
for all bounded open sets $\Omega \subset \mathbb{R}^{d}$ and all $t
\in \mathbb{R}$, and with $u_{\mid t=0}=\varphi$. If in addition  we assume
$\varphi \in W_{1}^s$, this $H^{s}$-solution satisfies $u
\in\(C \cap L^{\infty}\)\(\mathbb{R}, W_{1}^s\)$ if
$\lambda<0$ and $u \in C\(\mathbb{R}, W_{1}^s\)$ if
$\lambda>0$. Moreover, the $W_{1}^s$-solution $u$ satisfies the
equation \eqref{eq:logNLS in H^-s} in the sense of $({W_{1}^s})^{*}$,
where $(W_{1}^s)^{*}$ is the dual space of $W_{1}^s$. Finally, if
$\varphi\in H^1(\R^d)$, then the solution $u\in C\(\mathbb{R},
H^{s}\(\mathbb{R}^{d}\)\)$ to \eqref{eq:logNLS} satisfies in addition
$u\in C\(\mathbb{R}, H^1\(\mathbb{R}^{d}\)\)$.
\end{theorem}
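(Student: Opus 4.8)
The plan is to adapt the regularization scheme of \cite{CaHa80} and \cite{HO} to the fractional operator. I would first replace the singular term $\log(|u|^2)u$ by a family $g_\eps(u)=u\,h_\eps(|u|^2)$, where $h_\eps$ is a bounded real-valued approximation of $r\mapsto\log r$ (e.g.\ $h_\eps(r)=\log\frac{\eps+r}{1+\eps r}$), chosen so that $g_\eps$ is globally Lipschitz on $L^2$ and on $H^s$ while retaining the structural inequalities below. Since $e^{-it(-\Delta)^s}$ is a unitary group on every $H^\si(\R^d)$, the Duhamel formulation of the regularized equation is a contraction on $C([-T,T],H^s)$ with $T$ depending only on the Lipschitz constant; iterating yields, for each $\eps>0$, a unique global $u_\eps\in C(\R,H^s)$ with $\|u_\eps(t)\|_{L^2}=\|\varphi\|_{L^2}$. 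Everything then reduces to obtaining $\eps$-uniform bounds and passing to the limit.

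The heart of the matter, and the place where no use is made of the energy, is the $\eps$-uniform $H^s$ bound. Writing $(-\Delta)^s$ via its singular-integral form and symmetrizing, one obtains, for real-valued $V$,
\[
\IM\<(-\Delta)^s u, Vu\> = \frac{c_{d,s}}{2}\iint\frac{\bigl(V(x)-V(y)\bigr)\,\IM\bigl(u(x)\overline{u(y)}\bigr)}{|x-y|^{d+2s}}\,\dd x\,\dd y .
\]
Taking $V=h_\eps(|u_\eps|^2)$ and invoking the elementary pointwise bound
\[
\bigl|\bigl(\log|a|^2-\log|b|^2\bigr)\,\IM(a\bar b)\bigr|\le C\,|a-b|^2,\qquad a,b\in\C,
\]
which transfers to $h_\eps$ uniformly in $\eps$, controls the right-hand side by $C\|u_\eps\|_{\dot H^s}^2$. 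Hence $\frac{\dd}{\dd t}\|(-\Delta)^{s/2}u_\eps\|_{L^2}^2\le C|\lambda|\,\|u_\eps\|_{\dot H^s}^2$, and Gronwall together with mass conservation give a bound in $L^\infty_{\mathrm{loc}}(\R,H^s)$ independent of $\eps$. For convergence I would use the $L^2$ stability furnished by the Cazenave--Haraux inequality $|\IM[(f(a)-f(b))(\bar a-\bar b)]|\le 4|a-b|^2$ with $f(z)=z\log|z|^2$: it gives $\frac{\dd}{\dd t}\|u_\eps-u_{\eps'}\|_{L^2}^2\le 8|\lambda|\,\|u_\eps-u_{\eps'}\|_{L^2}^2$, so $(u_\eps)$ is Cauchy in $C_{\mathrm{loc}}(\R,L^2)$. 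Interpolating with the uniform $H^s$ bound and taking weak limits in $H^s$ produces a solution $u\in L^\infty_{\mathrm{loc}}(\R,H^s)$ of \eqref{eq:logNLS in H^-s}; the identity above makes $t\mapsto\|u(t)\|_{H^s}$ continuous, so the standard weak-continuity-plus-norm-continuity argument upgrades $u$ to $C(\R,H^s)$, and the same $L^2$ estimate gives uniqueness.

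To reach the energy space $W_1^s$, energy conservation enters for the first time. For $\varphi\in W_1^s$ I would pass to the limit in the regularized energies to obtain $E(u(t))=E(\varphi)$ with $E$ as in \eqref{eq:energy}. Since $E$ controls $\|(-\Delta)^{s/2}u\|_{L^2}^2$ only modulo the potential term $\frac{\lambda}{2}\int|u|^2(\log|u|^2-1)\,\dd x$, the two signs of $\lambda$ must be handled separately: combining energy conservation with a logarithmic Sobolev inequality to estimate $\int|u|^2\log|u|^2\,\dd x$ in terms of the mass and the $H^s$ norm yields a uniform-in-time bound when $\lambda<0$, whence $u\in(C\cap L^\infty)(\R,W_1^s)$, and continuity of the flow in $W_1^s$ when $\lambda>0$, whence $u\in C(\R,W_1^s)$. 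That \eqref{eq:logNLS in H^-s} holds in $(W_1^s)^*$ follows by testing against $W_1^s$ and checking that $\log(|u|^2)u$ pairs admissibly in this duality.

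Finally, for persistence in $H^1$ I would estimate $\frac{\dd}{\dd t}\|\nabla u\|_{L^2}^2$ directly, exploiting that $\nabla$ commutes with $(-\Delta)^s$. The linear contribution is then purely imaginary and vanishes, leaving $2\lambda\,\IM\<\nabla(\log(|u|^2)u),\nabla u\>$. Writing $\nabla(\log(|u|^2)u)=\log(|u|^2)\nabla u+u\,\nabla\log|u|^2$, the first term contributes a real integrand, while the second obeys the pointwise bound $\bigl|u\,\nabla\log|u|^2\bigr|\le 2|\nabla u|$ because $\nabla\log|u|^2=2\RE(\bar u\nabla u)/|u|^2$. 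Thus $\frac{\dd}{\dd t}\|\nabla u\|_{L^2}^2\le C|\lambda|\,\|\nabla u\|_{L^2}^2$, and Gronwall together with the $L^2$ stability (to identify the limit of $H^1$-approximated data) yields $u\in C(\R,H^1)$. I expect the recurring obstacle to be the absence of a Leibniz rule for $(-\Delta)^s$: wherever the local case $s=1$ relies on pointwise cancellation in the nonlinear term, one must instead route the computation through the symmetrized singular-integral identity and the scalar inequalities above, keeping all constants uniform in the regularization parameter $\eps$.
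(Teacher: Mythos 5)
The decisive gap is in your convergence step, and it is precisely the point where the logarithmic nonlinearity bites. The estimate $\frac{d}{dt}\|u_\eps-u_{\eps'}\|_{L^2}^2\le 8|\lambda|\,\|u_\eps-u_{\eps'}\|_{L^2}^2$ would follow from the Cazenave--Haraux inequality if $u_\eps$ and $u_{\eps'}$ solved the \emph{same} equation; they do not, and comparing the two equations leaves a cross term $\int|g_\eps(u_{\eps'})-g_{\eps'}(u_{\eps'})|\,|u_\eps-u_{\eps'}|$ that your scalar inequalities do not absorb. Because the nonlinearity is sublinear near $z=0$, the best pointwise bound on $g_\eps(z)-g_{\eps'}(z)$ in the region $|z|^2\lesssim\max(\eps,\eps')$ is a small \emph{constant} --- of order $\sqrt{\max(\eps,\eps')}$ for your $h_\eps$, of order $|\eps-\eps'|$ for the paper's $2z\log(|z|+\eps)$, cf.\ the term $|\eps-\mu||u-v|$ in Lemma~\ref{log} --- not a small multiple of $|u_\eps-u_{\eps'}|$. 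Hence the cross term is only bounded by this constant times $\|u_\eps-u_{\eps'}\|_{L^1}$, and an $L^1$ norm is not controlled in an $L^2$-based theory: no global-in-space Gronwall argument closes. (Your specific $h_\eps$ has an additional defect: the large-amplitude cutoff produces a further discrepancy of size $\log(\eps/\eps')\,|z|$, whose coefficient does not vanish as $\eps,\eps'\to0$ independently.) This is exactly why the paper proves the Cauchy property only in $C_T(L^2_{\rm loc})$, inserting cutoffs $\zeta_R$ so that the error becomes $|\eps-\mu|\,|B_{2R}|^{1/2}\|\varphi\|_{L^2}$, and why the localization costs something genuinely fractional: the term $\IM\<\zeta_R^2(-\Delta)^s w,w\>$ no longer vanishes and must be estimated via the fractional Leibniz rule (Lemma~\ref{lem:li19}) together with $\|(-\Delta)^{s/2}\zeta_R^2\|_{L^\infty}\lesssim R^{-s}$, after which one sends $\eps,\mu\to0$ first and $R\to\infty$ second (Section~\ref{sec:construct1}). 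Your proposal contains no substitute for this localization-plus-commutator step, which is the central new difficulty in the $H^s$ part of the theorem.

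The same omission affects two further steps. Uniqueness: for $u,v\in L^\infty_T(H^s)$ the function $u\log|u|^2$ is in general only in $L^2_{\rm loc}$ (on $\{|u|<1\}$ it behaves like $|u|^{1-\delta}$), so the pairing behind $\frac{d}{dt}\|u-v\|_{L^2}^2$ must itself be localized; the paper's uniqueness lemma again runs through $\zeta_R$, the commutator bound, and Fatou as $R\to\infty$. Local solvability of the regularized problem: pointwise global Lipschitz continuity of $g_\eps$ gives Lipschitz continuity on $L^2$ and boundedness from $H^s$ to $H^s$, but \emph{not} Lipschitz continuity from $H^s$ to $H^s$, so the Duhamel map is not a contraction on $C([-T,T],H^s)$ as claimed; the standard repair is a fixed point on a bounded ball of $L^\infty_T(H^s)$ equipped with the $C_T(L^2)$ distance, or the paper's route through weak solutions. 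These two are fixable by known arguments. On the positive side, your uniform $H^s$ bound is correct and is essentially the paper's Lemma~\ref{lem:prior estimate} in symmetrized singular-integral form (both rest on the same pointwise inequality for the logarithm), and your $H^1$ persistence computation matches Section~\ref{sec:H1}, provided it is run on the regularized solutions $u_\eps$ rather than formally on $u$.
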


The next result addresses on the construction of strong solutions in
$W_{2}^s$, where 
\begin{equation*} 
W_2^{s}:= \Big \{ u\in H^{2s}(\R^d) \, , \,  x\mapsto
u(x)\log |u(x)|^2\in L^2(\R^d)\Big\} \, ,
\end{equation*}
and this space is the natural counterpart of the space $W_2$ of the
$H^2$-energy space 
introduced in \cite{HO} for the case $s=1$. Note that considering this
space is interesting especially when $s>1/2$, since we have seen in
Theorem~\ref{theo:Hs} that the $H^1$ regularity is propagated, and
$H^1(\R^d)\subset H^{2s}(\R^d)$ when $s\le 1/2$. 

In the fractional case, it seems delicate to adapt the
strategy introduced in \cite{HO}, as some algebraic structure is
lost. More precisely, the strategy in \cite{HO} starts by showing that
$\d_t u \in L^\infty_{\rm loc}(\R,L^2)$, to eventually conclude that
$\Delta u \in L^\infty_{\rm loc}(\R,L^2)$. At this level of
generality, this is the standard approach, as presented in
e.g. \cite{CazCourant}, but the logarithmic nonlinearity actually
requires some special care. The above line of reasoning needs, as an
intermediary step, to know that $u\log|u|^2\in L^\infty_{\rm
  loc}(\R,L^2)$, which is by no means obvious, due to the region
$\{|u|<1\}$ where the nonlinearity is morally sublinear. This
difficulty is overcome in \cite{HO} by a beautiful algebraic identity
(\cite[Lemma~3.3]{HO}), whose derivation involves an integration by
parts in the term
\begin{equation*}
  \RE\( \Delta u,u\log\(|u|+\eps\)\)_{L^2} =- \RE\( \overline u \nabla
  u,\frac{\nabla|u|}{|u|+\eps}\)_{L^2}+ \( |\nabla u|^2,  \log\(|u|+\eps\)\)_{L^2} .
\end{equation*}
In the present case, we would face
\begin{equation*}
  \RE\( (-\Delta)^s u,u\log\(|u|+\eps\)\)_{L^2},
\end{equation*}
and the integration by parts would require to control a fractional
derivative of $ u\log\(|u|+\eps\)$, at least in the case $s<1/2$ (for
$s>1/2$, one could consider the gradient again). 
\smallbreak

To overcome this issue, we adopt the approach considered in
\cite{CaGa18} for the case $s=1$, and rely on some finite momentum
assumption.
For $0<\alpha \leq 1$, we have
\[
\mathcal{F}\(H^{\alpha}\)=\left\{u \in L^{2}\(\mathbb{R}^{d}\), x \mapsto\langle x\rangle^{\alpha} u(x) \in L^{2}\(\mathbb{R}^{d}\)\right\},
\]
where $\langle x\rangle:=\sqrt{1+|x|^{2}}$, and this space is equipped
with the norm
\[
\|u\|_{\mathcal{F}\(H^{\alpha}\)}:=\left\|\langle x\rangle^{\alpha} u(x)\right\|_{L^{2}\(\mathbb{R}^{d}\)} .
\]

Denote, for $\alpha>0$,  $X_\alpha^{2s}:=H^{2s}\cap
\mathcal{F}(H^\alpha)$:
for any $\alpha>0$, $X_\alpha^{2s}\subset W_2^s$, as can be seen from
the estimate, valid for any $\delta \in (0,1)$,
\begin{equation}\label{ineq:log_roughly}
\left|u \log \(|u|^2\)\right| \lesssim|u|^{1-\delta}+|u|^{1+\delta}.
\end{equation}

\begin{theorem}\label{theo:X}
    Let $\lambda \in \mathbb{R} $, $0<s<1$. Consider
    $0<\alpha<2s$ with $\alpha\le 1$. For any $\varphi \in
    X_\alpha^{2s}= H^{2s}\cap
\mathcal{F}(H^\alpha) $, there exists a unique solution $u\in
C_w\cap L^\infty_{\rm loc}(\R,X_\alpha^{2s})$ to \eqref{eq:logNLS} in the sense of 
    \begin{equation}\label{eq:logNLS in L^2(omega)} 
i\d_t u -(-\Delta )^s u =\lambda \log\(|u|^2\)u\, \quad \text{in} \quad L^2(\Omega), 
\end{equation} 
for all bounded open sets $\Omega \subset \mathbb{R}^{d}$ and a.e. $t
\in \mathbb{R}$, with $u_{\mid t=0}=\varphi$. Moreover, when $\lambda<0, u \in C\(\mathbb{R}, X_\alpha^{2s}\)$ and \eqref{eq:logNLS in L^2(omega)} holds in $L^{2}\(\mathbb{R}^{d}\)$ and for all $t \in \mathbb{R}$.
\end{theorem}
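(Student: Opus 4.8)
The plan is to prove Theorem~\ref{theo:X} via a regularized approximation scheme, following the Hayashi–Ozawa philosophy adapted to the fractional setting through the finite-momentum approach of \cite{CaGa18}. I would first regularize the logarithmic nonlinearity, replacing $\log(|u|^2)$ by a smoothed version $f_\eps(u) := \log(|u|^2+\eps)$ (or a similarly truncated variant that is Lipschitz on bounded sets and removes the singularity at $u=0$), and solve the regularized equation $i\d_t u^\eps -(-\Delta)^s u^\eps = \lambda f_\eps(u^\eps)u^\eps$. For each fixed $\eps>0$ the nonlinearity is smooth, so a fixed-point argument using the $L^2$-isometry and $H^{2s}$-boundedness of the free propagator $e^{-it(-\Delta)^s}$ yields a local, then global (via mass conservation), solution $u^\eps \in C(\R,H^{2s})$. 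The real work is to obtain $\eps$-uniform a~priori bounds in $X_\alpha^{2s}=H^{2s}\cap\mathcal{F}(H^\alpha)$ so that one may pass to the limit $\eps\to 0$.

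\smallbreak

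There are two families of estimates to control uniformly in $\eps$. For the $H^{2s}$ regularity, I would differentiate the energy-type quantity and, since the troublesome step $u\log|u|^2\in L^\infty_{\rm loc}(\R,L^2)$ cannot be reached by the integration-by-parts trick of \cite{HO} (precisely the obstruction flagged in the text), instead bound $\|u^\eps\log(|u^\eps|^2)\|_{L^2}$ directly by splitting into $\{|u^\eps|<1\}$ and $\{|u^\eps|\ge 1\}$ and applying \eqref{ineq:log_roughly}: on the sublinear region $|u|^{1-\delta}$ is controlled in $L^2$ by interpolating the $L^2$ mass with the $\mathcal{F}(H^\alpha)$ weight (a weighted $L^2$ norm gives integrability near the region where $|u|$ is small, via the momentum bound), while on $\{|u|\ge 1\}$ the superlinear term $|u|^{1+\delta}$ is handled by Sobolev embedding of $H^{2s}$. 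For the weighted norm $\|u^\eps\|_{\mathcal{F}(H^\alpha)}=\|\langle x\rangle^\alpha u^\eps\|_{L^2}$, I would compute $\frac{d}{dt}\|\langle x\rangle^\alpha u^\eps\|_{L^2}^2$; the nonlinear term is real and drops out (as $\lambda f_\eps(u^\eps)$ is real), leaving the commutator term $\RE\bigl(\langle x\rangle^{2\alpha}u^\eps, (-\Delta)^s u^\eps\bigr)_{L^2}$, which is governed by the Lie commutator $[(-\Delta)^s,\langle x\rangle^\alpha]$ between the fractional Laplacian and the weight — exactly the commutator estimate that the abstract mentions as the key step.

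\smallbreak

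Thus the main obstacle is establishing that this commutator is of sufficiently low order, i.e. bounding $\|[(-\Delta)^s,\langle x\rangle^\alpha]u\|_{L^2}$ by a constant times $\|u\|_{H^{2s}}+\|u\|_{\mathcal{F}(H^\alpha)}$ (or by lower-order Sobolev-weighted norms), under the restriction $0<\alpha<2s$, $\alpha\le 1$. I expect this to be proved by the earlier-announced estimate on the commutator between $(-\Delta)^s$ and multiplication by a monomial, via the singular-integral (or hypersingular kernel) representation of $(-\Delta)^s$ together with the bound $\big|\langle x\rangle^\alpha-\langle y\rangle^\alpha\big|\lesssim|x-y|^\alpha$ valid for $\alpha\le 1$; the constraint $\alpha<2s$ is what guarantees the kernel singularity is integrable and the commutator loses strictly fewer than $2s$ derivatives. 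Granting this commutator bound, a Gronwall argument closes the uniform $X_\alpha^{2s}$ estimate on any finite time interval.

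\smallbreak

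Finally I would pass to the limit $\eps\to 0$. The uniform bounds give, up to a subsequence, weak-$*$ convergence $u^\eps\rightharpoonup u$ in $L^\infty_{\rm loc}(\R,X_\alpha^{2s})$; strong $L^2_{\rm loc}$ convergence on bounded spatial domains (via the weighted bound providing tightness plus an Aubin–Lions-type compactness using $\d_t u^\eps$ bounded in a negative-order space) lets me identify the nonlinear limit and verify that $u$ solves \eqref{eq:logNLS in L^2(omega)} in $L^2(\Omega)$ for a.e.\ $t$, with $u\in C_w\cap L^\infty_{\rm loc}(\R,X_\alpha^{2s})$. Uniqueness follows from the $L^2$-stability estimate for the logarithmic nonlinearity (the standard inequality $\big|\IM\bigl((\log|z|^2-\log|w|^2)(z-w),\overline{z-w}\bigr)\big|\le C|z-w|^2$, which makes the Cauchy problem Lipschitz in $L^2$ despite the non-Lipschitz nonlinearity). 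In the focusing case $\lambda<0$, the additional sign information upgrades weak to strong continuity in time, yielding $u\in C(\R,X_\alpha^{2s})$ and the equation holding in $L^2(\R^d)$ for all $t$, as claimed.
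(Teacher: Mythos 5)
Your skeleton (regularize the logarithm, propagate the weighted norm through the commutator $[(-\Delta)^s,\<x\>^\alpha]$, bound the nonlinearity by splitting sublinear/superlinear regions, pass to the limit, get uniqueness from the $L^2$-stability of the logarithm) matches the paper's, but there is a genuine gap at the heart of the $H^{2s}$ part: you never obtain, nor even state, the uniform bound $\d_t u_\eps\in L^\infty_{\rm loc}(\R,L^2)$. An $L^2$ bound on $u_\eps\log(|u_\eps|^2)$ together with the weighted bound is \emph{not} enough to control $\|u_\eps\|_{H^{2s}}$: if you literally ``differentiate the energy-type quantity'' $\|(-\Delta)^{s}u_\eps\|_{L^2}^2$, the time derivative produces the pairing $\IM\((-\Delta)^s g_\eps(u_\eps),(-\Delta)^s u_\eps\)_{L^2}$, which requires $2s$ derivatives of the logarithmic nonlinearity --- precisely the fractional chain rule obstruction the paper flags; a Duhamel estimate likewise needs $\|g_\eps(u_\eps)\|_{H^{2s}}$, not $\|g_\eps(u_\eps)\|_{L^2}$. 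The paper's route (Lemma~\ref{lem:dtu}, following Kato) is to differentiate the \emph{equation} in time and estimate $\frac{d}{dt}\|\d_t u_\eps\|_{L^2}^2$, where the structure of the logarithm makes the nonlinear contribution $-4\lambda\IM\(\frac{u_\eps}{|u_\eps|+\eps}\d_t|u_\eps|,\d_t u_\eps\)$ controllable by $\|\d_t u_\eps\|_{L^2}^2$; the momentum/Sobolev splitting you describe is used only once, to bound the initial datum $\|\d_t u_\eps(0)\|_{L^2}$ via \eqref{approxi} at $t=0$. Only after $\d_t u_\eps$, the weighted norm, and hence $g_\eps(u_\eps)$ are all bounded in $L^2$ does one read $(-\Delta)^s u_\eps=-i\d_t u_\eps-\lambda g_\eps(u_\eps)$ off the equation to close the $H^{2s}$ bound algebraically, with no Gronwall on the $H^{2s}$ norm at all. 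Without this step your argument cannot conclude.

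Two secondary points. First, your commutator estimate is stated as $\|[(-\Delta)^s,\<x\>^\alpha]u\|_{L^2}\lesssim\|u\|_{H^{2s}}+\|u\|_{\F(H^\alpha)}$; with that right-hand side the Gronwall inequality for $\|\<x\>^\alpha u_\eps\|_{L^2}$ is circular, since the uniform $H^{2s}$ bound is exactly what is not yet available at that stage. The paper's Lemma~\ref{lem:commutator} proves the sharper statement that the commutator maps $H^{s}\to L^2$, and the $H^s$ norm is already under control by \eqref{MT} from the Theorem~\ref{theo:Hs} theory, which is what lets the weighted estimate close on its own; its proof combines your kernel-representation idea (used to show $(-\Delta)^s\<x\>^\alpha\in L^\infty$, with $\alpha<2s$ for integrability at infinity) with the fractional Leibniz rule of Lemma~\ref{lem:li19}, including an extra gradient correction term when $1/2\le s<1$ that your sketch does not anticipate. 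Second, constructing the approximate solutions by a fixed point ``using the $H^{2s}$-boundedness of the free propagator'' is questionable when $2s\le d/2$ (no $L^\infty$ embedding, and $g_\eps$ is not globally Lipschitz); the paper instead takes the $H^s$ approximate solutions already built for Theorem~\ref{theo:Hs} by compactness methods and upgrades their regularity a posteriori.
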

The new difficulty in proving the above result, compared to the case
$s=1$, lies in the fact that the Lie bracket
$[(-\Delta)^s,\<x\>^\alpha]$ requires some extra care; see
Lemma~\ref{lem:commutator}.

We underline the fact that we do not know whether the $H^\si$
regularity is propagated by the flow of \eqref{eq:logNLS}, where $\si
= \max (1,2s)$, like in the case of the regular Laplacian $s=1$.

\subsubsection*{Notations}
\begin{itemize}
\item $\int f$ is employed in place of $\int_{\mathbb R^d}f(x) dx$.
 \item The inner product in $L^2$ is denoted by
\[(f,g)_{L^2}=\int_{\mathbb R^d} f(x)\overline{g(x)}
 dx=\int f\bar{g}.\]
\item  Let $C(I,X)$ (resp. $C_w(I,X)$) be the space strongly
(resp. weakly) continuous functions from interval $I(\subseteq{\mathbb
  R})$ to $X$.\\ 
\item Abbreviated notation: for $T>0$, we write
\[C_T(X)=C([-T,T],X), \quad L_T^\infty(X)=L^\infty((-T,T),X),.
  \]
\item $A\lesssim B$ represents the inequality $A\leq CB$ with some
constant $C>0$.\
\end{itemize}

\subsubsection*{Content}

The rest of the paper is organized as follows. In
Section~\ref{sec:lem}, we collect lemmas which are of constant use in
this paper. Section~\ref{sec:Hs} is dedicated to the study of the Cauchy
problem for \eqref{eq:logNLS} in both $H^s$  and $W^1_s$, proving
Theorem~\ref{theo:Hs}. In Section~\ref{sec:X}, we  
consider higher regularity and prove Theorem~\ref{theo:X}.

\section{Useful lemmas}
\label{sec:lem}

The following lemma is a generalization of the inequality proven
initially by Cazenave and Haraux \cite{CaHa80} in the case $\eps=\mu=0$:

\begin{lemma}[\cite{HO}, Lemma A.1]\label{log}
    For all $u, v \in \mathbb{C}$ and $\eps, \mu\ge 0$, we have
    \begin{equation}\label{ineq:log}
|\IM(u \log (|u|+\eps)-v \log (|v|+\mu))(\bar{u}-\bar{v})| \leq|u-v|^2+|\eps-\mu||u-v|.
\end{equation}
\end{lemma}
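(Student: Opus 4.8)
The plan is to reduce the imaginary part to a product of a ``geometric'' factor $\IM(u\bar v)$ and a ``logarithmic'' factor, each of which can be estimated separately by elementary means. Write $f(u)=u\log(|u|+\eps)$ and $g(v)=v\log(|v|+\mu)$, so that the quantity to be bounded is $\IM\big((f(u)-g(v))(\bar u-\bar v)\big)$. First I would expand this into the four products $f(u)\bar u$, $-f(u)\bar v$, $-g(v)\bar u$ and $g(v)\bar v$, and observe that the two diagonal terms are real: indeed $f(u)\bar u=|u|^2\log(|u|+\eps)$ and $g(v)\bar v=|v|^2\log(|v|+\mu)$ are both real (the usual convention $0\log 0=0$ covering the degenerate case $u=0$, $\eps=0$), so they contribute nothing to the imaginary part.

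It then remains to handle the two off-diagonal terms. Since $\log(|u|+\eps)$ and $\log(|v|+\mu)$ are real scalars, and $\IM(\overline{u\bar v})=-\IM(u\bar v)$ together with $g(v)\bar u=\overline{u\bar v}\,\log(|v|+\mu)$, the remaining contribution collapses to the single expression
\begin{equation*}
\IM\big((f(u)-g(v))(\bar u-\bar v)\big)=-\IM(u\bar v)\,\big(\log(|u|+\eps)-\log(|v|+\mu)\big).
\end{equation*}
The geometric factor is controlled using $\IM(u\bar v)=\IM((u-v)\bar v)=-\IM(u\,\overline{(u-v)})$, which follow from $\IM(|v|^2)=\IM(|u|^2)=0$; taking moduli gives the sharp bound $|\IM(u\bar v)|\le|u-v|\,\min(|u|,|v|)$. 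For the logarithmic factor I would set $p=|u|+\eps$ and $q=|v|+\mu$ and invoke the elementary scalar inequality $\min(p,q)\,|\log p-\log q|\le|p-q|$, which follows from $\log(q/p)\le q/p-1$ when $p\le q$.

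Combining the two estimates, and using $\min(|u|,|v|)\le\min(p,q)$ together with $|p-q|=\big||u|-|v|+\eps-\mu\big|\le|u-v|+|\eps-\mu|$ (where $\big||u|-|v|\big|\le|u-v|$), I obtain
\begin{equation*}
\big|\IM\big((f(u)-g(v))(\bar u-\bar v)\big)\big|\le|u-v|\,\min(|u|,|v|)\,|\log p-\log q|\le|u-v|\big(|u-v|+|\eps-\mu|\big),
\end{equation*}
which is exactly \eqref{ineq:log}.

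The only genuinely delicate point is the real-part cancellation in the first step: it is what turns an \emph{a priori} awkward difference of logarithmically weighted complex vectors into a clean product of a geometric and a scalar factor, and it relies crucially on the fact that the weight $\log(|u|+\eps)$ depends on $u$ only through its modulus. Everything downstream is one-line real-variable calculus, and the case $\eps=\mu=0$ recovers the Cazenave--Haraux inequality.
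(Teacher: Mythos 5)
Your proof is correct, and it is essentially the argument behind the cited result \cite[Lemma~A.1]{HO} (the paper itself does not reproduce a proof, it only quotes the lemma): the key cancellation reducing the left-hand side to $-\IM(u\bar v)\bigl(\log(|u|+\eps)-\log(|v|+\mu)\bigr)$, followed by the two elementary bounds $|\IM(u\bar v)|\le |u-v|\min(|u|,|v|)$ and $\min(p,q)\,|\log p-\log q|\le |p-q|$ with $p=|u|+\eps$, $q=|v|+\mu$, is exactly the standard route. Your handling of the degenerate case $u=0$, $\eps=0$ via the convention $0\log 0=0$ (where the factorized identity degenerates but both sides vanish) is also the right remark to make.
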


We will also use several times the fractional Leibniz rule. We state a
simplified version of a result from \cite{Li19}, by using the
fact that BMO contains $L^\infty$, and considering only the $L^2$ setting.

\begin{lemma}[\cite{Li19}, Corollary~1.4]\label{lem:li19}
  For $\si>0$, let $A^\si$ be a differential operator such that its
  symbol $\widehat{A^\si}(\xi)$ is homogeneous of degree $\si$ and
  $\widehat{A^\si}(\xi)\in C^\infty(\Sph^{d-1})$.
  \begin{itemize}
  \item If $0<\si<1$,
    \begin{align*}
      &\|A^\si(fg)-gA^\si f\|_{L^2}\lesssim \|f\|_{L^2}\|(-\Delta)^{\si/2}
        g\|_{L^\infty}.
    \end{align*}
    \item If $1\le \si<2$, 
     \begin{align*}
      &\|A^\si(fg)-gA^\si f-\nabla g\cdot
        A^{\si,\nabla}f\|_{L^2}\lesssim \|f\|_{L^2}\|(-\Delta)^{\si/2}
        g\|_{L^\infty}.
     \end{align*}
     where $\widehat A^{\si,\nabla}(\xi) = -i\nabla_\xi
     \(\widehat{A^\si}(\xi)\)$. 
 \end{itemize}
  \end{lemma}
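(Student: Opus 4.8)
The plan is to establish both estimates through a Littlewood--Paley / paraproduct analysis, reducing the bilinear commutator to paraproducts classified by the relative frequency sizes of $f$ and $g$. First I would record the symbol information: writing $a(\xi)=\widehat{A^\si}(\xi)$, homogeneity of degree $\si$ together with smoothness on $\Sph^{d-1}$ yields $|a(\xi)|\lesssim|\xi|^\si$, $|\nabla a(\xi)|\lesssim|\xi|^{\si-1}$ and $|\nabla^2 a(\xi)|\lesssim|\xi|^{\si-2}$ away from the origin, and, when $0<\si<1$, the global H\"older bound $|a(\xi)-a(\zeta)|\lesssim|\xi-\zeta|^\si$. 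On the Fourier side the commutator carries the bilinear symbol $a(\xi)-a(\xi-\eta)$ (the variable $\eta$ being dual to $g$, and $\xi-\eta$ dual to $f$), while, using $\widehat{A^{\si,\nabla}}(\xi)=-i\nabla_\xi a(\xi)$, the corrected expression $A^\si(fg)-gA^\si f-\nabla g\cdot A^{\si,\nabla}f$ carries the symbol $a(\xi)-a(\xi-\eta)-\eta\cdot\nabla a(\xi-\eta)$, i.e. the second-order Taylor remainder of $a$ at $\xi-\eta$.

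Next I would split $f=\sum_j\Delta_j f$, $g=\sum_k\Delta_k g$ and group the resulting bilinear pieces into three regions. In the region where $g$ is at strictly lower frequency than $f$ (so the output is localized at the frequency $2^j$ of $f$) one obtains the main term: for $0<\si<1$ the difference $a(\xi)-a(\xi-\eta)$ is of size $2^k2^{j(\si-1)}$ by the mean value estimate, and combined with the Bernstein bound $\|\Delta_k g\|_{L^\infty}\lesssim 2^{-k\si}\|(-\Delta)^{\si/2}g\|_{L^\infty}$ this produces a gain $2^{(k-j)(1-\si)}$ that is summable over $k\le j$; almost-orthogonality in $j$ then closes the estimate by $\|f\|_{L^2}\|(-\Delta)^{\si/2}g\|_{L^\infty}$. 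For $1\le\si<2$ the same difference no longer gains, which is precisely why one subtracts $\nabla g\cdot A^{\si,\nabla}f$: this term cancels the first-order Taylor contribution $\eta\cdot\nabla a(\xi-\eta)$ in exactly this region, leaving the quadratic remainder of size $2^{2k}2^{j(\si-2)}$ and hence a summable gain $2^{(k-j)(2-\si)}$.

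The delicate regions are those where $g$ sits at a frequency comparable to or higher than $f$, so that the output is localized at the frequency of $g$. Here the reduction to $\|g\|_{\dot B^\si_{\infty,\infty}}$ is too lossy --- the $L^\infty$ norm of a lacunary sum can dominate the supremum of its blocks --- and one must keep the full norm $\|(-\Delta)^{\si/2}g\|_{L^\infty}$. I would transfer the operator $A^\si$ onto the high-frequency factor, writing the piece essentially as a paraproduct $\Pi(f,(-\Delta)^{\si/2}g)$ up to lower-order commutators of $A^\si$ with the low-frequency factor, and then invoke the Coifman--Meyer bound $\|\Pi(f,h)\|_{L^2}\lesssim\|f\|_{L^2}\|h\|_{\mathrm{BMO}}$ together with the embedding $L^\infty\hookrightarrow\mathrm{BMO}$. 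The high-high interactions are handled in the same spirit, orthogonalizing the output into dyadic blocks and using the uniform $L^\infty$-boundedness of the projections $\Delta_k$ applied to $(-\Delta)^{\si/2}g$.

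I expect the main obstacle to be precisely this high-frequency-$g$ regime: verifying that transferring $A^\si$ past the low-frequency factor costs only harmless lower-order terms, and that the resulting paraproduct is genuinely of Coifman--Meyer type, so that the $L^\infty$ bound on $(-\Delta)^{\si/2}g$ (rather than a more expensive norm) suffices. A secondary technical point, in the range $1\le\si<2$, is to check that the single correction term $\nabla g\cdot A^{\si,\nabla}f$ simultaneously cancels the first-order Taylor contribution in the low-$g$ region while remaining itself controlled in the comparable- and high-$g$ regions. Since the statement is quoted as Corollary~1.4 of \cite{Li19}, one may alternatively deduce it directly from the general fractional Leibniz estimates proved there, using only $L^\infty\hookrightarrow\mathrm{BMO}$ and the $L^2$ specialization; the paraproduct scheme above is essentially a self-contained reconstruction of that argument.
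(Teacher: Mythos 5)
The paper does not prove this lemma at all: it is imported verbatim (up to two trivial specializations) from the literature. The statement is attributed to \cite{Li19}, Corollary~1.4, and the only ``proof'' content in the paper is the remark preceding the lemma, namely that one may replace the $\mathrm{BMO}$ norm of the cited result by the $L^\infty$ norm (since $L^\infty\hookrightarrow \mathrm{BMO}$) and restrict to the $L^2$ setting. So the rigorous route consistent with the paper is precisely the one you sketch in your final paragraph; everything before it must be judged as an attempt to reconstruct the proof of the cited result from scratch.

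As a reconstruction, your low-frequency-$g$ analysis is correct: the bilinear symbols $a(\xi)-a(\xi-\eta)$ and $a(\xi)-a(\xi-\eta)-\eta\cdot\nabla a(\xi-\eta)$ are computed correctly, the mean-value (resp.\ second-order Taylor) bound combined with the Bernstein estimate $\|\Delta_k g\|_{L^\infty}\lesssim 2^{-k\si}\|(-\Delta)^{\si/2}g\|_{L^\infty}$ does yield the summable gains $2^{(k-j)(1-\si)}$, resp.\ $2^{(k-j)(2-\si)}$, and this correctly explains why the correction $\nabla g\cdot A^{\si,\nabla}f$ is needed exactly when $\si\ge 1$. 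The genuine gap is the regime you yourself flag as the main obstacle: when $g$ carries the dominant frequency. There the commutator structure gives no cancellation, and the naive scheme fails quantitatively: for the piece $\sum_k A^\si\bigl(S_{k-3}f\,\Delta_k g\bigr)$, each block is $O(1)\,\|f\|_{L^2}\|(-\Delta)^{\si/2}g\|_{L^\infty}$ with no decay in $k$, so almost-orthogonality alone produces a divergent square sum; a similar divergence occurs in the high-high portion of $g\,A^\si f$. Moreover, the appeal to Coifman--Meyer cannot be made as stated: the normalized bilinear symbol $\bigl(a(\xi)-a(\xi-\eta)\bigr)/|\eta|^{\si}$ is bounded, but its $\eta$-derivatives are too singular near $\eta=0$ to satisfy the Coifman--Meyer derivative conditions, which is precisely why fractional Leibniz and commutator estimates of this type required the dedicated arguments of \cite{Li19} (and related works) rather than classical bilinear multiplier theorems. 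The paraproduct bound $\|\sum_k S_{k-3}f\,\Delta_k h\|_{L^2}\lesssim\|f\|_{L^2}\|h\|_{\mathrm{BMO}}$ is true and is the right tool for one piece, but the claimed reduction to it ``up to harmless lower-order commutators'' is exactly the technical core of the cited paper and is not carried out here. In short: as a standalone proof the proposal is incomplete at its acknowledged hard point; as a reading of the paper, the correct observation is simply that this lemma is quoted, not proved.
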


We recall that the characterization of the $H^s$ norm, for $0<s<1$,
can be expressed as follows (see e.g. \cite{Hitch2012}):
\begin{align*}
    \|f\|_{{H}^s}^2&=\|f\|_{L^2}^2+
                     \iint_{\R^d\times\R^d}\frac{|f(x)-f(y)|^2}{|x-y|^{d+2s}}dydx\\
  & =\|f\|_{L^2}^2+
                     \iint_{\R^d\times\R^d}\frac{|f(x+y)-f(x)|^2}{|y|^{d+2s}}dydx.
\end{align*}
We also have, for $0<s<1$ and $f\in \Sch(\R^d)$ (see e.g. \cite{Hitch2012}),
\begin{equation}\label{eq:lapl-frac-int}
  (-\Delta)^s f(x) =c(d,s)
  \iint_{\R^d\times\R^d}\frac{f(x+y)+f(x-y)-2f(x)}{|y|^{d+2s}}dydx, 
\end{equation}
for some constant $c(d,s)$ whose exact value is irrelevant here.

The following lemma will be crucial in the proof of
Theorem~\ref{theo:X}. 
\begin{lemma}\label{lem:commutator}
 Let $0<s<1$.   If $0<\alpha<2s$ and $\alpha\leq 1$, then the
 commutator $[(-\Delta)^s,\langle x\rangle ^\alpha]$ is continuous from $H^{s}(\mathbb R^d)$ to $L^2(\mathbb R^d)$.
\end{lemma}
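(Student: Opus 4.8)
The plan is to read the commutator through the fractional Leibniz rule of Lemma~\ref{lem:li19}, applied with the operator $A^{2s}=(-\Delta)^s$ (whose symbol $|\xi|^{2s}$ is homogeneous of degree $2s$ and smooth on $\Sph^{d-1}$), with $g=\langle x\rangle^\alpha$ and $f=u$. The only quantities the lemma asks us to control are $\|(-\Delta)^{s}\langle x\rangle^\alpha\|_{L^\infty}$ (note that $\sigma/2=s$ here) and, in the range $2s\ge1$, also $\|\nabla\langle x\rangle^\alpha\|_{L^\infty}$. Both norms are finite precisely under the two hypotheses: $\alpha<2s$ will guarantee $(-\Delta)^{s}\langle x\rangle^\alpha\in L^\infty$, while $\alpha\le1$ gives $\nabla\langle x\rangle^\alpha=\alpha\langle x\rangle^{\alpha-2}x\in L^\infty$.

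The main obstacle, and the place where $\alpha<2s$ is used, is the claim that $(-\Delta)^{s}\langle x\rangle^\alpha\in L^\infty(\R^d)$. To prove it I would use the singular integral representation \eqref{eq:lapl-frac-int} and split the $y$-integral into $|y|\le1$ and $|y|>1$. On $|y|\le1$ I would expand $\langle x+y\rangle^\alpha+\langle x-y\rangle^\alpha-2\langle x\rangle^\alpha$ to second order, using $\|D^2\langle\cdot\rangle^\alpha\|\lesssim\langle\cdot\rangle^{\alpha-2}\le1$; this bounds the local contribution by $\int_{|y|\le1}|y|^{2-d-2s}\,dy$, which converges since $s<1$, uniformly in $x$. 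The delicate region is $|y|>1$, where keeping $2\langle x\rangle^\alpha$ separately fails to be uniform in $x$, so one must retain the cancellation in $2\langle x\rangle^\alpha-\langle x+y\rangle^\alpha-\langle x-y\rangle^\alpha$. The cleanest way to see boundedness, and in fact decay, is a scaling argument: since $\langle x\rangle^\alpha$ behaves like $|x|^\alpha$ at infinity and $(-\Delta)^s$ is homogeneous of degree $2s$, one finds $(-\Delta)^{s}\langle x\rangle^\alpha(x)=O(|x|^{\alpha-2s})$ as $|x|\to\infty$, which tends to $0$ exactly because $\alpha<2s$; combined with the local boundedness coming from the smoothness of $\langle x\rangle^\alpha$, this yields the $L^\infty$ bound.

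With this in hand, the two ranges of $s$ are treated by the two bullets of Lemma~\ref{lem:li19}. When $0<s<1/2$, so that $\sigma=2s\in(0,1)$, the first estimate gives directly
\[
\big\|[(-\Delta)^s,\langle x\rangle^\alpha]u\big\|_{L^2}\lesssim\|u\|_{L^2}\,\|(-\Delta)^{s}\langle x\rangle^\alpha\|_{L^\infty}\lesssim\|u\|_{L^2}\le\|u\|_{H^s},
\]
so the commutator is even bounded from $L^2$ to $L^2$. When $1/2\le s<1$, so that $\sigma=2s\in[1,2)$, the second estimate leaves the correction term $\nabla\langle x\rangle^\alpha\cdot A^{2s,\nabla}u$, whose symbol $-i\nabla_\xi|\xi|^{2s}=-2is|\xi|^{2s-2}\xi$ is homogeneous of degree $2s-1$. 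Hence, using $\nabla\langle x\rangle^\alpha\in L^\infty$,
\[
\big\|\nabla\langle x\rangle^\alpha\cdot A^{2s,\nabla}u\big\|_{L^2}\lesssim\|\nabla\langle x\rangle^\alpha\|_{L^\infty}\,\|u\|_{H^{2s-1}}\lesssim\|u\|_{H^{2s-1}}\le\|u\|_{H^s},
\]
the last inequality because $2s-1\le s$ for $s\le1$. Adding the remainder controlled by $\|u\|_{L^2}$ yields $\|[(-\Delta)^s,\langle x\rangle^\alpha]u\|_{L^2}\lesssim\|u\|_{H^s}$ in this range as well, which completes the proof.

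The one technical caveat is that Lemma~\ref{lem:li19} is stated for sufficiently regular arguments, whereas $\langle x\rangle^\alpha$ does not decay. I would therefore first apply it with $u\in\Sch(\R^d)$ and a truncated weight $\langle x\rangle^\alpha\chi(x/R)$, observe that the $L^\infty$ bounds above are uniform in $R$, let $R\to\infty$, and finally conclude by density of $\Sch(\R^d)$ in $H^s(\R^d)$.
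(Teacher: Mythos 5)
Your overall architecture coincides with the paper's proof: apply Lemma~\ref{lem:li19} with $A^{2s}=(-\Delta)^s$ and $g=\langle x\rangle^\alpha$, reduce the claim to $(-\Delta)^s\langle x\rangle^\alpha\in L^\infty(\R^d)$ plus, when $2s\ge 1$, $\nabla\langle x\rangle^\alpha\in L^\infty$ together with $\|A^{2s,\nabla}f\|_{L^2}\lesssim\|f\|_{\dot H^{2s-1}}\lesssim\|f\|_{H^s}$. Your treatment of both bullets of the Leibniz rule and of the correction term is correct, and your near-region estimate in \eqref{eq:lapl-frac-int} (second-order Taylor, Hessian bounded by $\langle x\rangle^{\alpha-2}$, convergence of $\int_{|y|\le1}|y|^{2-d-2s}dy$ since $s<1$) is exactly the paper's.

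The gap is in the far region $|y|\ge1$, which is the one genuinely delicate point of the lemma. You replace the needed estimate by a scaling heuristic: ``$\langle x\rangle^\alpha$ behaves like $|x|^\alpha$ at infinity and $(-\Delta)^s$ is homogeneous of degree $2s$, hence $(-\Delta)^s\langle x\rangle^\alpha(x)=O(|x|^{\alpha-2s})$.'' The conclusion is a true fact, but as written this is an assertion, not a proof: $(-\Delta)^s$ is nonlocal, so the pointwise asymptotics of $g$ do not by themselves yield the asymptotics of $(-\Delta)^s g$. To make the scaling rigorous you would write $(-\Delta)^s g(\lambda\omega)=\lambda^{\alpha-2s}\,(-\Delta)^s g_\lambda(\omega)$ with $g_\lambda(z)=(\lambda^{-2}+|z|^2)^{\alpha/2}$, and you would still need bounds on $(-\Delta)^s g_\lambda$ at unit-sphere points, uniform in $\lambda\ge1$ --- that is, estimates of exactly the kind you were trying to avoid. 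Moreover, the premise that pushed you into this detour is mistaken: you claim that one ``must retain the cancellation'' in the second difference because pairwise differences are not uniform in $x$. In fact they are: since $0<\alpha\le1$ and $\langle\cdot\rangle$ is $1$-Lipschitz,
\begin{equation*}
 \left|\langle x\pm y\rangle^\alpha-\langle x\rangle^\alpha\right|\le\left|\langle x\pm y\rangle-\langle x\rangle\right|^\alpha\le|y|^\alpha ,
\end{equation*}
so the far integral is bounded by $\int_{|y|\ge1}|y|^{\alpha-d-2s}\,dy<\infty$, uniformly in $x$, precisely because $\alpha<2s$. This one line (it is the paper's argument) closes the gap; with it your proof is complete. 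Your final truncation-and-density remark is a reasonable precaution, but note it would itself require checking that the truncated weights $\langle x\rangle^\alpha\chi(x/R)$ satisfy the two $L^\infty$ bounds uniformly in $R$; the paper instead fixes $f\in C_c^\infty(\R^d)$, keeps $g=\langle x\rangle^\alpha$, and concludes by density in $H^s$.
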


\begin{proof}
 The proof relies on the fractional Leibniz rule stated in
 Lemma~\ref{lem:li19}, with $A^\si=(-\Delta)^s$, hence $\si=2s$. Fix
 $f\in C^\infty_c(\R^d)$, and let $g(x)=\<x\>^\alpha$.\\

 We first show that under the assumptions of the lemma, $(-\Delta)^sg\in
 L^\infty(\R^d)$, by using the characterization \eqref{eq:lapl-frac-int}.
 In the region $\{|y|\geq 1\}$, we write, since $0<\alpha\le 1$,
 \begin{equation*}
   |\<x\pm y\>^\alpha-\<x\>^\alpha|\le |\<x\pm
   y\>-\<x\>|^\alpha\lesssim |y|^\alpha, 
 \end{equation*}
 hence
 \[\left|\int_{|y|\geq1}
     \frac{\<x+y\>^\alpha+\<x-y\>^\alpha-2\<x\>^\alpha}{|y|^{d+2s}}
     dy\right|\lesssim \int_{|y|\geq1}
     \frac{|y|^\alpha}{|y|^{d+2s}} dy<\infty,
 \]
provided that $\alpha<2s$. 
In the ball $\{|y|<1\}$,  Taylor's formula yields
\begin{equation*}
     \<x+y\>^\alpha+\<x-y\>^\alpha-2\<x\>^\alpha = \<\nabla^2g(x)y,y\>
     + \O\( |y|^3 \),
\end{equation*}
where the remainder is uniform in $x\in\R^d$, as the third order derivatives of $g$ are bounded. Also, the
Hessian of $g$ is bounded since $|\nabla^2g(x)|\lesssim
\<x\>^{\alpha-2}$, and
\[\left|\int_{|y|\leq1}
     \frac{\<x+y\>^\alpha+\<x-y\>^\alpha-2\<x\>^\alpha}{|y|^{d+2s}}
     dy\right|\lesssim \int_{|y|\leq1}
     \frac{|y|^2}{|y|^{d+2s}} dy<\infty,
 \]
since $s<1$. 
\smallbreak

\noindent {\bf First case: $0<s<1/2$.} In view of the first case in
Lemma~\ref{lem:li19},
\begin{equation*}
  \|[(-\Delta)^s,\langle x\rangle ^\alpha]f\|_{L^2}\lesssim
  \|f\|_{L^2}\|(-\Delta)^sg\|_{L^\infty}\lesssim \|f\|_{L^2},
\end{equation*}
and $[(-\Delta)^s,\langle x\rangle ^\alpha]$ is continuous from
$L^2(\R^d)$ to $L^2(\R^d)$.

\noindent {\bf Second case: $1/2\le s<1$.}  In view of the second case in
Lemma~\ref{lem:li19},
\begin{equation*}
  \|[(-\Delta)^s,\langle x\rangle ^\alpha]f\|_{L^2}\lesssim
 \|\nabla g\cdot A^{2s,\nabla}f\|_{L^2} + \|f\|_{L^2}\|(-\Delta)^sg\|_{L^\infty}.
\end{equation*}
In view of the definition of $A^{2s,\nabla}$, with $A^{2s}=(-\Delta)^s$, 
\begin{equation*}
  \|A^{2s,\nabla}f\|_{L^2}\lesssim \|f\|_{\dot H^{2s-1}}\lesssim \|f\|_{H^s},
\end{equation*}
since $0<s<1$. Recalling that since $\alpha\le 1$, $\nabla g\in
L^\infty$, the lemma is proved.
\end{proof}

\section{The Cauchy problem in $H^s$ and the energy space}
\label{sec:Hs}

In this section, we prove Theorem~\ref{theo:Hs},
by resuming the strategy of \cite{HO}, which requires very few
adaptations to treat this fractional case (essentially, the fractional
Leibniz rule).

\subsection{Approximate problems}\label{snbsec:approx in H^s}

For $\eps>0$, we consider the approximate equation
\begin{equation}\label{approxi}
i \d_{t} u_{\eps}-(-\Delta)^s u_{\eps}=2 \lambda u_{\eps} \log \(\left|u_{\eps}\right|+\eps\), \quad
u_{\eps}(0, x)=\varphi(x).
\end{equation}
We set
\[
g(u)=2 u \log |u|, \quad g_{\eps}(u)=2 u \log (|u|+\eps).
\]
For $\si \geq 0$ we have
\[
\int_{0}^{\si} g_{\eps}(\tau) d \tau=\frac{1}{2} \si^{2} \log \((\si+\eps)^{2}\)-\frac{1}{2} \int_{0}^{\si} \frac{2 \tau^{2}}{\tau+\eps} d \tau.
\]
We define $G_{\eps}(u)$ by
\[
G_{\eps}(u)=\frac{1}{2} \int|u|^{2} \log \((|u|+\eps)^{2}\)-\frac{1}{2} \int \mu_{\eps}(|u|) ,\quad \text { for } u \in H^{s}\(\mathbb{R}^{d}\) \text {, }
\]
where
\[
\mu_{\eps}(\si):=\int_{0}^{\si} \frac{2 \tau^{2}}{\tau+\eps} d \tau \quad \text { for } \si \geq 0.
\]
We define $E_{\eps}(u)$ by
\begin{equation}\label{eq:E_varepsilon}
    \begin{aligned}
E_{\eps}(u) & =\frac{1}{2} \int|(-\Delta)^{s/2} u|^{2}+\lambda G_{\eps}(u) \\
& =\frac{1}{2} \int|(-\Delta)^{s/2} u|^{2}+\frac{\lambda}{2} \int|u|^{2} \log \((|u|+\eps)^{2}\)-\frac{\lambda}{2} \int \mu_{\eps}(|u|)
\end{aligned}
\end{equation}

\begin{lemma}\label{lem:exist-approx}
Let $\varphi\in H^s(\R^d)$ and $\eps>0$. Then \eqref{approxi}
possesses a unique solution
\begin{equation*}
  u_{\eps} \in C\(\mathbb{R}, H^{s}\(\mathbb{R}^{d}\)\) \cap C^{1}\(\mathbb{R}, H^{-s}\(\mathbb{R}^{d}\)\).
\end{equation*}
 Moreover, the mass and energy are conserved: for all $t\in \R$,
\begin{equation*}
    \left\|u_{\eps}(t)\right\|_{L^{2}}^{2}=\|\varphi\|_{L^{2}}^{2},\quad 
    E_{\eps}\(u_{\eps}(t)\)=E_{\eps}(\varphi).
\end{equation*}
\end{lemma}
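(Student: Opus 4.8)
The plan is to recast \eqref{approxi} as a fixed point problem for the Duhamel formula
\[
u_\eps(t)=e^{-it(-\Delta)^s}\varphi-2i\lambda\int_0^t e^{-i(t-\tau)(-\Delta)^s}\,u_\eps(\tau)\log\(|u_\eps(\tau)|+\eps\)\,d\tau,
\]
where $\{e^{-it(-\Delta)^s}\}_{t\in\R}$ is the strongly continuous unitary group generated on $L^2(\R^d)$, and on every $H^\sigma$, by the self-adjoint operator $(-\Delta)^s$. The gain over the original equation is that for fixed $\eps>0$ the regularized nonlinearity $g_\eps(u)=2u\log(|u|+\eps)$ is tame: the logarithm is bounded below by $\log\eps$ and grows only logarithmically at infinity. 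First I would check that $g_\eps$ sends bounded sets of $H^s$ into bounded sets of $H^s$ and is Lipschitz continuous there; this is exactly the step where the scalar bound \eqref{ineq:log_roughly} and the fractional Leibniz rule of Lemma~\ref{lem:li19} must replace the ordinary chain rule available when $s=1$. Granting this, the contraction mapping principle on $C([-T,T],H^s)$ for $T=T(\|\varphi\|_{H^s},\eps)$ small yields a unique local solution, and the equation itself gives $\d_t u_\eps=-i\((-\Delta)^su_\eps+\lambda g_\eps(u_\eps)\)\in C(H^{-s})$, whence $u_\eps\in C^1(\R,H^{-s})$.

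Mass conservation is immediate: from \eqref{approxi},
\[
\frac{d}{dt}\|u_\eps\|_{L^2}^2=2\,\IM\<(-\Delta)^su_\eps+\lambda g_\eps(u_\eps),u_\eps\>,
\]
and both $\<(-\Delta)^su_\eps,u_\eps\>=\|(-\Delta)^{s/2}u_\eps\|_{L^2}^2$ and $\<g_\eps(u_\eps),u_\eps\>=2\int|u_\eps|^2\log(|u_\eps|+\eps)$ are real, so the right-hand side vanishes. For energy conservation the key algebraic fact is that $G_\eps$ was defined in \eqref{eq:E_varepsilon} precisely so that its $L^2$-gradient is $g_\eps$, that is $G_\eps'(u)=g_\eps(u)$ (this is the role of subtracting the $\mu_\eps$ term). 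Hence \eqref{approxi} reads $i\d_t u_\eps=E_\eps'(u_\eps)$ with $E_\eps'(u)=(-\Delta)^su+\lambda g_\eps(u)$, and formally
\[
\frac{d}{dt}E_\eps(u_\eps)=\RE\<E_\eps'(u_\eps),\d_t u_\eps\>=\RE\<E_\eps'(u_\eps),-iE_\eps'(u_\eps)\>=0.
\]
Since this pairing is delicate at the $H^s$ level, I would justify it rigorously by first running the computation for regularized (smooth) data, for which $u_\eps$ carries enough regularity that every pairing is classical, and then extending both conservation laws to arbitrary $\varphi\in H^s$ by the continuity of the data-to-solution map furnished by the Lipschitz bounds.

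Uniqueness and the passage from local to global existence both rest on Lemma~\ref{log}. For uniqueness, the difference $w=u_\eps-v_\eps$ of two solutions satisfies, after pairing with $w$ and taking imaginary parts, $\frac{d}{dt}\|w\|_{L^2}^2=2\lambda\,\IM\int\(g_\eps(u_\eps)-g_\eps(v_\eps)\)\bar w$, which by Lemma~\ref{log} with $\eps=\mu$ is controlled by $|\lambda|\,\|w\|_{L^2}^2$ up to a constant; Gronwall with $w(0)=0$ forces $w\equiv0$. For global existence I would use the two conservation laws to exclude finite-time blow-up of $\|u_\eps(t)\|_{H^s}$: mass conservation controls $\|u_\eps\|_{L^2}$, while energy conservation gives $\frac12\|(-\Delta)^{s/2}u_\eps\|_{L^2}^2=E_\eps(\varphi)-\lambda G_\eps(u_\eps)$, and the slow growth of the logarithm lets one bound $|G_\eps(u_\eps)|$ by $C_\eps\|u_\eps\|_{L^2}^2$ plus an arbitrarily small multiple of $\|(-\Delta)^{s/2}u_\eps\|_{L^2}^2$ (split $\{|u_\eps|\le1\}$ from $\{|u_\eps|>1\}$, bound the latter through \eqref{ineq:log_roughly} and a Gagliardo--Nirenberg inequality with subcritical exponent). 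Absorbing the seminorm then yields a bound uniform on $\R$, so the local solution extends to all times.

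The main obstacle is the very first step: proving that $g_\eps$ is Lipschitz on bounded sets of $H^s$ for fractional $s$. When $s=1$ one differentiates $g_\eps(u)$ directly and controls the two resulting coefficients $\log(|u|+\eps)$ and $|u|/(|u|+\eps)\le1$; for $0<s<1$ there is no such pointwise chain rule, and one must instead combine the fractional Leibniz rule of Lemma~\ref{lem:li19} with the logarithmic growth encoded in \eqref{ineq:log_roughly} in order to transfer the regularity of $u$ onto $g_\eps(u)$ at the price of a harmless logarithmic factor. Everything downstream (local well-posedness, the conservation laws, uniqueness, and globalization) is then standard.
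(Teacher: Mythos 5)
There is a genuine gap, and it sits at the foundation of your argument: the contraction mapping in $C([-T,T],H^s)$ requires $g_\eps$ to be Lipschitz on bounded sets of $H^s$ \emph{into} $H^s$. You correctly identify this as ``the main obstacle'', but the tools you invoke cannot deliver it. Lemma~\ref{lem:li19} is a commutator estimate whose right-hand side involves $\|(-\Delta)^{\si/2}g\|_{L^\infty}$ of one factor; applied to $u\log(|u|+\eps)$ with $g=\log(|u|+\eps)$, it would require uniform $L^\infty$ control of a fractional derivative of $\log(|u_\eps|+\eps)$, which is unavailable for $u_\eps$ merely in $H^s$. Nor does the pointwise bound \eqref{ineq:log_roughly} help: the Lipschitz ``constant'' of $g_\eps$ grows like $\log^{+}$ of the modulus, so the standard fact that globally Lipschitz functions act boundedly on $H^s$ for $0<s<1$ (via the Gagliardo seminorm) does not apply; and if one compensates $\log^{+}|u|\lesssim|u|^{\delta}$ inside the double integral
\begin{equation*}
\iint_{\R^d\times\R^d}\frac{\(\log^{+}|u(x)|+\log^{+}|u(y)|\)^2|u(x)-u(y)|^2}{|x-y|^{d+2s}}\,dx\,dy ,
\end{equation*}
H\"older forces the Gagliardo density $x\mapsto\int|u(x)-u(y)|^2|x-y|^{-d-2s}dy$ into some $L^p$ with $p>1$, i.e.\ into a space strictly smaller than $H^s$. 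This loss of regularity cannot be recovered, because the fractional Schr\"odinger group is unitary on every $H^\si$ and has no smoothing: Strichartz estimates for $0<s<1$ come with a loss of derivatives (this is exactly why the paper states that the fixed-point/Strichartz route ``seems delicate''). Consequently your Duhamel iteration map does not return to $C_T(H^s)$, and the fixed point does not close; the same obstruction undercuts your plan to prove the conservation laws by density, since it relies on the Lipschitz continuity of the data-to-solution map in $H^s$ that the fixed point was supposed to provide.

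The paper's proof is organized precisely to avoid this step. It first proves uniqueness of weak solutions in $L^\infty_T H^s\cap W^{1,\infty}_T H^{-s}$ by the $L^2$ energy estimate and Lemma~\ref{log}, then obtains existence from the compactness machinery of Cazenave (\cite[Theorem~3.3.5]{CazCourant}, adapted to $(-\Delta)^s$), which only requires the much weaker property
\begin{equation*}
\|g_\eps(u)-g_\eps(v)\|_{H^{-s}}\le\|g_\eps(u)-g_\eps(v)\|_{L^2}\le C^\eps(M)\|u-v\|_{H^s},
\end{equation*}
i.e.\ Lipschitz continuity from bounded sets of $H^s$ into $L^2$ only; the a priori $H^s$ bound on solutions is then obtained by applying Lemma~\ref{log} pointwise inside the Gagliardo seminorm (as in Lemma~\ref{lem:prior estimate}), bypassing any fractional chain rule. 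Your remaining steps (mass conservation, the Gronwall uniqueness argument, and globalization via $|\lambda G_\eps(u)|\le\tfrac14\|u\|_{\dot H^s}^2+C(\|u\|_{L^2})$) do match the paper, but they all sit downstream of the existence step that your approach cannot complete.
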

\begin{proof}
  Unlike in the case of the regular Laplacian, $s=1$, it seems delicate
  to invoke Strichartz estimates independently of the space dimension
  $d$ in order to solve \eqref{approxi} in $H^s$, since a loss of
  regularity is present when $0<s<1$, see \cite{COX11}, and
  \cite{GuoWang2014,HongSire2015}.  We rather adopt the approach of
  \cite{CHHO14}, which in turn resumes the arguments from
  \cite{CazCourant}. A key step is to check that, for a
  given $T>0$,
  \eqref{approxi} has a most one (weak) solution $u_\eps\in
  L^\infty_TH^s\cap W^{1,\infty}_TH^{-s}$. 
By interpolation, such a
  solution belongs to $C_TL^2$, and if $u_\eps,v_\eps$ are two such
  solutions, $u_\eps-v_\eps$ solves
  \begin{equation*}
    \(i\d_t-(-\Delta)^s\)(u_\eps-v_\eps)=
    \lambda\(u_\eps\log\(|u_\eps|+\eps\) - v_\eps\log\(|v_\eps|+\eps\) \).
  \end{equation*}
We then proceed with the usual argument for $L^2$ estimates in
Schr\"odinger equations: multiply by $\bar u_\eps-\bar v_\eps$,
integrate over $\R^d$, and take the imaginary part. The term
involving the fractional Laplacian vanishes by self-adjointness, and
the nonlinear term is controlled thanks to Lemma~\ref{log} (with
$\mu=\eps$), so we get
\begin{equation*}
  \frac{d}{dt}\|u_\eps-v_\eps\|_{L^2}^2\lesssim \|u_\eps-v_\eps\|_{L^2}^2,
\end{equation*}
hence $u_\eps\equiv v_\eps$ by Gronwall Lemma, since
$\|u_\eps(t)-v_\eps(t)\|_{L^2}$ is continuous at $t=0$. The existence
of such a weak
  solution is given by \cite[Theorem~3.3.5]{CazCourant}, which is
  readily adapted to the case of the fractional Laplacian, and since
  we note that for fixed $\eps>0$, there exists a function $C^\eps(\cdot)$
  such that if $\|u\|_{H^s},\|v\|_{H^s}\le M$, then
  \begin{equation*}
 \|g_\eps(u)-g_\eps(v)\|_{H^{-s}} \le   \|g_\eps(u)-g_\eps(v)\|_{L^2}
 \le C^\eps(M) \|u-v\|_{H^s}.  
  \end{equation*}
With the above
uniqueness property, we can resume the proof of
\cite[Theorem~3.3.9]{CazCourant} and \cite[Theorem~3.4.1]{CazCourant}
for the globalization, since, as we have, for every $\delta>0$,
\begin{equation*}
  \left| |u|^2\log\(|u|+\eps\) -\mu_\eps(|u|)\right|\le
  C_{\eps,\delta} |u|^{2+\delta} + |u|,
\end{equation*}
Gagliardo-Nirenberg and Young inequalities yield
\begin{equation*}
 |\lambda G_\eps(u)| \le \frac{1}{4}\|u\|_{\dot H^s}^2 + C(\|u\|_{L^2}),
\end{equation*}
so we obtain the lemma. 
\end{proof}

\subsection{Construction of weak $H^s$ solution}
\label{sec:construct1}
We initially establish a uniform estimate for approximate solutions within the $H^s$ space.

\begin{lemma}\label{lem:prior estimate}
    Let $0<\alpha\leq 1$ and $\varphi\in H^s$. For all $t\in \mathbb R$ we have
    \begin{equation}\label{ineq:prior estimate}
\left\|(-\Delta)^{s/2} u_{\eps}(t)\right\|_{L^2}^2 \leq e^{4|\lambda||t|}\|(-\Delta)^{s/2} \varphi\|_{L^2}^2 .
\end{equation}
\end{lemma}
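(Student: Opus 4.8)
The plan is to run a standard energy-type multiplier argument on the regularized equation \eqref{approxi}, the whole point being that the sign-indefinite logarithmic term is controlled by the very quantity we are trying to estimate. Set $y(t):=\|(-\Delta)^{s/2}u_\eps(t)\|_{L^2}^2$. Differentiating in $t$ and inserting $\d_t u_\eps=-i(-\Delta)^su_\eps-2i\lambda u_\eps\log(|u_\eps|+\eps)$ from \eqref{approxi}, the kinetic contribution is $\RE\bigl(i\|(-\Delta)^su_\eps\|_{L^2}^2\bigr)=0$, so I am left with
\begin{equation*}
y'(t)=-4\lambda\,\IM\bigl((-\Delta)^su_\eps,\,u_\eps\log(|u_\eps|+\eps)\bigr)_{L^2}.
\end{equation*}
Everything then reduces to the bound $\bigl|\IM((-\Delta)^su_\eps,u_\eps\log(|u_\eps|+\eps))_{L^2}\bigr|\le y(t)$, after which Gronwall's lemma closes the argument and produces exactly the factor $e^{4|\lambda||t|}$.

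The heart of the proof is this last inequality, and I would obtain it by writing the pairing through the nonlocal (Gagliardo) form of the quadratic form of $(-\Delta)^s$, using the constant $c(d,s)$ of \eqref{eq:lapl-frac-int}: with $w(x):=\log(|u_\eps(x)|+\eps)$,
\begin{equation*}
\IM\bigl((-\Delta)^su_\eps,u_\eps w\bigr)_{L^2}=\frac{c(d,s)}{2}\iint\frac{\IM\bigl(u_\eps(x)\overline{u_\eps(y)}\bigr)\bigl(w(x)-w(y)\bigr)}{|x-y|^{d+2s}}\,dx\,dy,
\end{equation*}
the real parts of $(u_\eps(x)-u_\eps(y))\overline{(u_\eps(x)w(x)-u_\eps(y)w(y))}$ contributing nothing to the imaginary part. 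A direct expansion shows that the numerator equals, up to sign, $\IM\bigl[(u_\eps(x)w(x)-u_\eps(y)w(y))(\overline{u_\eps(x)}-\overline{u_\eps(y)})\bigr]$, which is precisely the left-hand side of \eqref{ineq:log} evaluated at $(u,v,\eps,\mu)=(u_\eps(x),u_\eps(y),\eps,\eps)$. Hence Lemma~\ref{log} with $\mu=\eps$ gives the pointwise bound by $|u_\eps(x)-u_\eps(y)|^2$, and integrating against $|x-y|^{-d-2s}$ with the same constant $c(d,s)/2$ reproduces exactly $\|(-\Delta)^{s/2}u_\eps\|_{L^2}^2$. This is the essential use of Lemma~\ref{log}.

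Combining the two steps yields $|y'(t)|\le 4|\lambda|\,y(t)$ for all $t$; since $y$ is continuous with $y(0)=\|(-\Delta)^{s/2}\varphi\|_{L^2}^2$, Gronwall's lemma gives \eqref{ineq:prior estimate}. Note that the hypothesis $0<\alpha\le 1$ plays no role in this estimate and is only there for uniformity with the subsequent statements.

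I expect the main obstacle to lie in the rigor of the time differentiation rather than in the inequality itself. For fixed $\eps>0$ one only has $u_\eps\in C(\R,H^s)\cap C^1(\R,H^{-s})$, so that $\d_t u_\eps$ and $(-\Delta)^su_\eps$ both live in $H^{-s}$, and the naive product rule for $\|(-\Delta)^{s/2}u_\eps\|_{L^2}^2$ pairs objects of insufficient regularity. I would resolve this by carrying the computation through the Gagliardo double integral, which is finite and continuous in $t$ for $H^s$-valued curves, and by justifying the differential identity either from the energy conservation already established in Lemma~\ref{lem:exist-approx}, or, if needed, by regularizing \eqref{approxi} so that the approximations are smooth enough to differentiate and then passing to the limit. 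Once the identity is legitimate, the pointwise estimate above does all the remaining work.
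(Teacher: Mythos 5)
Your proof is correct and takes essentially the same approach as the paper: both arguments hinge on the Gagliardo double-integral representation of the fractional Dirichlet form, the vanishing of the kinetic contribution by self-adjointness, the pointwise application of Lemma~\ref{log} with $\mu=\eps$, and a Gronwall closure — the paper merely organizes it by differentiating the $H^s$ seminorm of the differences $u_\eps(\cdot+y)-u_\eps(\cdot)$ rather than expanding the pairing $\IM\((-\Delta)^s u_\eps, u_\eps\log(|u_\eps|+\eps)\)_{L^2}$ afterwards, which is the same computation. Your proposed fix for the regularity issue (carrying the time derivative through the double integral) is exactly what the paper does, and your remark that the hypothesis $0<\alpha\le 1$ plays no role is also correct.
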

\begin{proof}
    We resume the energy estimate from \cite{carles2023low}: in view
    of the conservation of the $L^2$-norm,
    \begin{align*}
& \frac{d}{d t}\left\|u_{\eps}(t)\right\|_{H^{s}(\mathbb R^d)}^{2} \\
= & 2 \RE \iint_{\mathbb R^d \times \mathbb R^d} \overline{\(u_{\eps}(t, x+y)-u_{\eps}(t, x)\)} \d_{t}\(u_{\eps}(t, x+y)-u_{\eps}(t, x)\) \frac{d x d y}{|y|^{d+2 \alpha}} \\
= & -2\IM \iint_{\mathbb R^d \times \mathbb R^d} \overline{\(u_{\eps}(t, x+y)-u_{\eps}(t, x)\)} (-\Delta)^s\(u_{\eps}(t, x+y)-u_{\eps}(t, x)\) \frac{d x d y}{|y|^{d+2 \alpha}} \\
& -4 \lambda \IM \iint_{\mathbb R^d \times \mathbb R^d} \overline{\(u_{\eps}(t, x+y)-u_{\eps}(t, x)\)}\(g_{\eps}\(u_{\eps}(t, x+y)\)-g_{\eps}\(u_{\eps}(t, x)\)\) \frac{d x d y}{|y|^{d+2 \alpha}}.
\end{align*}
Here, the first term on the right-hand side of the equation vanishes,
since
$(-\Delta)^s$ is self-adjoint, and so the imaginary part of the
integral in $x$ is zero.
By applying Lemma~\ref{log} with $\mu=\eps$, we obtain 
\begin{align*}
& \frac{d}{d t}\left\|u_{\eps}(t)\right\|_{H^{s}(\mathbb R^d)}^{2} \\
\leq & 4|\lambda| \iint_{\mathbb R^d \times \mathbb R^d}\left|\IM\left[\overline{\(u_{\eps}(t, x+y)-u_{\eps}(t, x)\)}\(g_{\eps}\(u_{\eps}(t, x+y)\)-g_{\eps}\(u_{\eps}(t, x)\)\)\right]\right| \frac{d x d y}{|y|^{d+2 \alpha}} \\
\leq & 4|\lambda| \iint_{\mathbb R^d \times \mathbb R^d}\left|u_{\eps}(t, x+y)-u_{\eps}(t, x)\right|^{2} \frac{d x d y}{|y|^{d+2 \alpha}} \leq 4|\lambda|\left\|u_{\eps}(t)\right\|_{H^{s}(\mathbb R^d)}^{2} .
\end{align*}
Gronwall Lemma then yields
\[
\left\|u_{\eps}(t)\right\|_{H^{s}(\mathbb R^d)}^{2} \leq e^{4|\lambda t|}\|\varphi\|_{H^{s}(\mathbb R^d)}^{2} \quad \text { for all } t \in \mathbb{R} ,
\]
hence the lemma. 
\end{proof}

It follows from Lemma~\ref{lem:exist-approx} and \eqref{ineq:prior estimate} that for any $T>0$ we have
\begin{equation}\label{MT}
M_T:=\sup
_{0<\eps<1}\left\|u_{\eps}\right\|_{L^\infty_T\(H^s\)}
\leq C\(T,\|\varphi\|_{H^s}\) . 
\end{equation}
Next we prove that $\left\{u_{\eps}\right\}_{0<\eps<1}$ forms a Cauchy sequence in $C_T\(L_{\mathrm{loc}}^2\(\mathbb{R}^d\)\)$ as $\eps \downarrow 0$ for any $T>0$.
Take a function $\zeta \in C_c^{\infty}\(\mathbb{R}^d\)$ satisfying
\[
\zeta(x)=\left\{\begin{array}{ll}
1 & \text { if }|x| \leq 1, \\
0 & \text { if }|x| \geq 2, \\
\end{array} \quad 0 \leq \zeta(x) \leq 1 \quad \text { for all } \quad x \in \mathbb{R}^d.\right.
\]
For $R>0$ we set $\zeta_R:=\zeta(x / R)$. For $\eps, \mu \in(0,1)$, utilizing \eqref{approxi}, \eqref{log}, and \eqref{MT}, a direct computation indicates that
\[
\begin{aligned}
\frac{d}{d t}\left\|\zeta_R\(u_{\eps}-u_\mu\)\right\|_{L^2}^2= & 2 \IM\(i \zeta_R^2 \d_t\(u_{\eps}-u_\mu\), u_{\eps}-u_\mu\) \\
= & 2 \IM\(\zeta_R^2(-\Delta)^s\(u_{\eps}-u_\mu\), u_{\eps}-u_\mu\) \\
& +4 \lambda \IM\(\zeta_R^2\(u_{\eps}
    \log \(\left|u_{\eps}\right|+\eps\)-u_\mu
    \log \(\left|u_\mu\right|+\mu\)\),
  u_{\eps}-u_\mu\) .
\end{aligned}
\]
The first term on the right hand side is estimated thanks to  the
fractional Leibniz rule recalled in (the first
case of) Lemma~\ref{lem:li19}, since
\begin{equation*}
  \IM \( (-\Delta)^{s/2}\(u_{\eps}-u_\mu\), 
 \zeta_R^2(-\Delta)^{s/2}( u_{\eps}-u_\mu)\)=0,
\end{equation*}
by:
\begin{align*}
 & \left|\IM
  \((-\Delta)^{s/2}\(u_{\eps}-u_\mu\), 
 (-\Delta)^{s/2}\(\zeta_R^2( u_{\eps}-u_\mu)\)\)   \right|\\
  &\quad \lesssim
    \|u_{\eps}-u_\mu\|_{\dot{H^s}}\|u_{\eps}-u_\mu\|_{L^2}
    \|(-\Delta)^{s/2}\(\zeta_R^2\)\|_{L^\infty} . 
\end{align*}
The estimate $\|(-\Delta)^{s/2}\zeta_R^2\|_{L^\infty}\lesssim
1/R^s$ follows by homogeneity (using e.g. Fourier transform), and thus
\begin{align*}
  \frac{d}{d
    t}\left\|\zeta_R\(u_{\eps}-u_\mu\)\right\|_{L^2}^2&
     \le\frac{C}{R^s}\left\|u_{\eps}-u_\mu\right\|_{\dot{H^s}}
    \left\|u_{\eps}-u_\mu\right\|_{L^2}\\
  &\quad
    +4|\lambda|\(\left\|\zeta_R\(u_{\eps}-u_\mu\)\right\|_{L^2}^2
    +|\eps-\mu|
    \left\|\zeta_R^2\(u_{\eps}-u_\mu\)\right\|_{L^1}\).
\end{align*}
Gronwall Lemma implies
\begin{equation}\label{ineq:zetaL^2}
\left\|\zeta_R\(u_{\eps}-u_\mu\)(t)\right\|_{L^2}^2 \leq e^{4|\lambda| T} \(\frac{C\(M_T\)}{R^s}+|\eps-\mu|\left|B_{2 R}\right|^{1 / 2}\|\varphi\|_{L^2}\),
\end{equation}
for all $t\in [-T,T]$, where we have used
\[
\left\|\zeta_R^2\(u_{\eps}-u_\mu\)\right\|_{L^1} \leq\left\|u_{\eps}-u_\mu\right\|_{L^2\(B_{2 R}\)} \leq 2\left|B_{2 R}\right|^{1 / 2}\|\varphi\|_{L^2}.
\]
We now fix $R_0>0$ and take $R \in\(R_0, \infty\)$ as a parameter. It follows from \eqref{ineq:zetaL^2} that
\[
\left\|u_{\eps}-u_\mu\right\|_{C_T\(L^2\(B_{R_0}\)\)}^2 \leq\left\|\zeta_R\(u_{\eps}-u_\mu\)\right\|_{C_T\(L^2\)}^2 \leq C\(T,\|\varphi\|_{H^s}\)\(\frac{1}{R^s}+|\eps-\mu|\left|B_{2 R}\right|^{1 / 2}\),
\]
which yields 
\[
\limsup _{\eps, \mu \downarrow 0}\left\|u_{\eps}-u_\mu\right\|_{C_T\(L^2\(B_{R_0}\)\)}^2 \leq \frac{C\(T,\|\varphi\|_{H^s}\)}{R^s} \underset{R \to \infty}{\longrightarrow} 0.
\]
As $R_0>0$ is arbitrary, we conclude that the sequence
$\left\{u_{\eps}\right\}_{0<\eps<1}$ constitutes a
Cauchy sequence in
$C_T\(L_{\mathrm{loc}}^2\(\mathbb{R}^d\)\)$. When
combining this with Lemma~\ref{lem:exist-approx},  this entails that there exists a function $u \in L^{\infty}\(\mathbb{R}, L^2\(\mathbb{R}^d\)\)$ such that
\begin{equation}\label{converge in L^2_loc}
u_{\eps} \to u \quad \text { in } C_T\(L_{\text {loc }}^2\(\mathbb{R}^d\)\) \quad \text { as } \eps \downarrow 0,
\end{equation}
for all $T>0$.
\begin{lemma}\label{converg in H^s}
    $u \in L_{\mathrm{loc}}^{\infty}\(\mathbb{R}, H^s\(\mathbb{R}^d\)\)$ and
    \begin{equation}\label{weak converg in H^s}
u_{\eps}(t) \rightharpoonup u(t) \quad \text { in } H^s\(\mathbb{R}^d\) \quad \text { for all } t \in \mathbb{R} .
\end{equation}
\end{lemma}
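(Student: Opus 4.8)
The plan is to combine the uniform $H^s$-bound \eqref{MT} with the strong $L^2_{\mathrm{loc}}$-convergence \eqref{converge in L^2_loc} through a weak-compactness and limit-identification argument, carried out at each fixed time $t$. The two statements of the lemma will then follow in one stroke: weak compactness produces a weak $H^s$-limit, the already-constructed strong local limit $u(t)$ identifies it, and weak lower semicontinuity of the norm transfers the uniform bound to $u$.

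First I would fix $t\in\mathbb{R}$ and choose $T>|t|$. By \eqref{MT}, the family $\{u_\eps(t)\}_{0<\eps<1}$ is bounded in $H^s(\mathbb{R}^d)$ by $M_T$. Since $H^s(\mathbb{R}^d)$ is a Hilbert space, hence reflexive, every sequence $\eps_n\downarrow 0$ admits a subsequence $\eps_{n_k}$ along which $u_{\eps_{n_k}}(t)\rightharpoonup w$ weakly in $H^s(\mathbb{R}^d)$ for some $w\in H^s(\mathbb{R}^d)$.

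Next I would identify the weak limit $w$. Weak convergence in $H^s(\mathbb{R}^d)$ implies convergence in $\mathcal{D}'(\mathbb{R}^d)$; on the other hand, \eqref{converge in L^2_loc} gives $u_\eps(t)\to u(t)$ in $L^2_{\mathrm{loc}}(\mathbb{R}^d)$, hence also in $\mathcal{D}'(\mathbb{R}^d)$. By uniqueness of the distributional limit, $w=u(t)$; in particular $u(t)\in H^s(\mathbb{R}^d)$. Since the limit $u(t)$ is independent of the extracted subsequence, the standard subsequence principle upgrades this to convergence of the whole family, $u_\eps(t)\rightharpoonup u(t)$ in $H^s(\mathbb{R}^d)$, which is exactly \eqref{weak converg in H^s}. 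Finally, weak lower semicontinuity of the norm yields $\|u(t)\|_{H^s}\le\liminf_{\eps\downarrow 0}\|u_\eps(t)\|_{H^s}\le M_T$ for every $t\in[-T,T]$, and since $T>0$ is arbitrary this gives $u\in L^\infty_{\mathrm{loc}}(\mathbb{R},H^s(\mathbb{R}^d))$ with $\|u\|_{L^\infty_T(H^s)}\le M_T$.

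Essentially everything here is routine soft analysis, so there is no serious obstacle; the only point requiring mild attention is the identification of the weak $H^s$-limit with $u(t)$, which rests on the observation that the strong $L^2_{\mathrm{loc}}$-limit from \eqref{converge in L^2_loc} and any weak $H^s$-limit must coincide as distributions. The uniform bound \eqref{MT} then does the rest.
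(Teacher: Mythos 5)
Your proof is correct, and it is organized differently from the paper's, even though it consumes exactly the same two inputs, namely the uniform bound \eqref{MT} and the strong local convergence \eqref{converge in L^2_loc}. You argue by abstract compactness: reflexivity of $H^s$ gives weakly convergent subsequences, uniqueness of distributional limits (comparing with the $L^2_{\rm loc}$ limit) identifies every such limit as $u(t)$, the Urysohn subsequence principle upgrades this to convergence of the whole family, and weak lower semicontinuity of the norm yields the $L^\infty_{\rm loc}(\mathbb{R},H^s)$ bound. The paper instead never extracts subsequences: it first records the weak $L^2$ convergence $u_\eps(t)\rightharpoonup u(t)$ coming from \eqref{converge in L^2_loc} and mass conservation, then tests against $(-\Delta)^{s/2}\psi$ for $\psi\in C_c^\infty(\mathbb{R}^d)$, obtaining $\bigl|\int u(t)\,(-\Delta)^{s/2}\psi\bigr|\le M_T\|\psi\|_{L^2}$ in the limit, which by duality shows $u(t)\in H^s$ with $\|(-\Delta)^{s/2}u(t)\|_{L^2}\le M_T$; weak $H^s$ convergence then follows by passing to the limit in the pairings $\int(-\Delta)^{s/2}u_\eps(t)\,\psi=\int u_\eps(t)\,(-\Delta)^{s/2}\psi$ together with a density argument based on \eqref{MT}. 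The trade-off: your argument is shorter and transfers verbatim to any reflexive space, at the mild cost of invoking the subsequence principle for a continuously indexed family (harmless here, since the weak topology is metrizable on bounded sets of the separable space $H^s$); the paper's duality argument is more hands-on, producing the quantitative bound and the explicit weak limit of $(-\Delta)^{s/2}u_\eps(t)$ without any compactness extraction. Both are complete proofs of the lemma.
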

\begin{proof}
    First it follows from \eqref{converge in L^2_loc} that
    \begin{equation}\label{weak converg in L^2}
u_{\eps}(t) \rightharpoonup u(t) \quad \text { in } L^{2}\(\mathbb{R}^{d}\) \quad \text { for all } t \in \mathbb{R} \text {. }
\end{equation}

To prove $u \in L_{\mathrm{loc}}^{\infty}\(\mathbb{R},
  H^s\(\mathbb{R}^d\)\)$, we use the characterization of
$H^s$ functions by duality. For any $\psi \in
C_c^\infty\(\mathbb{R}^d\)$ and $t\in [-T,T]$ we obtain from
\eqref{MT} that  
\begin{align*}
\left|\int u_{\eps}(t) (-\Delta)^{s/2} \psi\right|=\left|\int
  (-\Delta)^{s/2} u_{\eps}(t) \psi\right| \le
  \|u_\eps(t)\|_{\dot H^s}\|\psi\|_{L^2}
 \le M_T\|\psi\|_{L^2}.
\end{align*}
Then it follows from \eqref{weak converg in L^2} that
\[
\left|\int u(t) (-\Delta)^{s/2}\psi\right| \leq
M_T\|\psi\|_{L^2} \quad \text { for all } t \in[-T, T]
. 
\]
We infer that for all $t\in [-T,T]$,
\begin{equation*}
  u(t)\in H^s(\R^d)\quad\text{and}\quad
  \|(-\Delta)^{s/2}u(t)\|_{L^2}\le M_T,
\end{equation*}
hence $u\in L^\infty_{\rm loc}(\R,H^s(\R^d))$. 
Also, in view of \eqref{weak converg in L^2},
$$\int (-\Delta)^{s/2}u_{\eps}(t) \psi \to \int (-\Delta)^{s/2} u(t) \psi ,$$
for any $\psi\in C_c^\infty({\mathbb R}^d)$ and $t\in \mathbb R$.
Using \eqref{MT} and a density argument, we deduce that
\begin{equation*}
(-\Delta)^{s/2} u_{\eps}(t) \rightharpoonup (-\Delta)^{s/2}
u(t) \quad \text { in } L^2\(\mathbb{R}^d\),\quad\text{for
  all }t\in \R,
\end{equation*}
hence the lemma.
\end{proof}
Next, we prove that the convergence of the nonlinear term.
\begin{lemma}\label{converg g in L^2_loc}
  For all $t\in \mathbb R$ we have
  \begin{equation*}
    g_{\eps}\(u_{\eps}(t)\) \to g(u(t))
    \text { in } L_{\mathrm{loc}}^{2}\(\mathbb{R}^{d}\) \quad
    \text { as } \eps \downarrow 0. 
  \end{equation*}
\end{lemma}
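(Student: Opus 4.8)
The plan is to fix $t\in\R$ and a ball $B_R\subset\R^d$, and to establish the convergence on $L^2(B_R)$ for every $R>0$; since $R$ is arbitrary this gives convergence in $L^2_{\rm loc}(\R^d)$. I would split the difference as
\[
g_\eps(u_\eps(t)) - g(u(t)) = \big(g_\eps(u_\eps(t)) - g(u_\eps(t))\big) + \big(g(u_\eps(t)) - g(u(t))\big),
\]
and treat the two brackets separately. The first bracket is harmless: for every $w\in\C$ with $w\neq0$,
\[
|g_\eps(w)-g(w)| = 2|w|\,\log\Big(1+\frac{\eps}{|w|}\Big)\le 2\eps,
\]
by the elementary inequality $\log(1+x)\le x$ (and the bound is trivial for $w=0$), so that $\|g_\eps(u_\eps(t))-g(u_\eps(t))\|_{L^2(B_R)}\le 2\eps\,|B_R|^{1/2}\to 0$, uniformly in the approximate solution.

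The real work lies in the second bracket, i.e. in proving $g(u_\eps(t))\to g(u(t))$ in $L^2(B_R)$. From \eqref{converge in L^2_loc} we have $u_\eps(t)\to u(t)$ in $L^2(B_R)$, hence $u_\eps(t)\to u(t)$ a.e.\ on $B_R$ along a subsequence; since $g$ is continuous on $\C$ with $g(0)=0$, this gives $g(u_\eps(t))\to g(u(t))$ a.e.\ on $B_R$ along that subsequence. To upgrade the almost-everywhere convergence to convergence in $L^2(B_R)$, I would invoke Vitali's convergence theorem, for which the missing ingredient is the equi-integrability of the family $\{|g(u_\eps(t))|^2\}_{\eps}$ on $B_R$. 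Here the rough logarithmic bound \eqref{ineq:log_roughly} is decisive: fixing $\delta>0$ small, $|g(w)|^2\lesssim |w|^{2(1-\delta)}+|w|^{2(1+\delta)}$, so for a suitable exponent $q>1$,
\[
\big\||g(u_\eps(t))|^2\big\|_{L^q(B_R)}\lesssim \|u_\eps(t)\|_{L^{2q(1-\delta)}(B_R)}^{2(1-\delta)}+\|u_\eps(t)\|_{L^{2q(1+\delta)}(B_R)}^{2(1+\delta)}.
\]
Using the uniform bound $M_T$ from \eqref{MT} together with the Sobolev embedding of $H^s(\R^d)$ into $L^{2q(1+\delta)}(B_R)$ — valid once $\delta>0$ and $q-1$ are small enough that $2q(1+\delta)$ stays below the Sobolev exponent of $H^s$ — the right-hand side is bounded uniformly in $\eps$. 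This yields a uniform $L^q(B_R)$ bound with $q>1$, hence equi-integrability of $|g(u_\eps(t))|^2$ in $L^1(B_R)$, and Vitali's theorem gives $g(u_\eps(t))\to g(u(t))$ in $L^2(B_R)$ along the subsequence; since the limit is independent of the subsequence, the full family converges.

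The delicate point, and the one genuinely requiring \eqref{ineq:log_roughly}, is the region $\{|u_\eps|\ll 1\}$, where $w\mapsto w\log|w|^2$ is only morally sublinear: one must rule out any concentration of $|g(u_\eps)|^2$ there, and it is precisely the lower power $|w|^{2(1-\delta)}$ in \eqref{ineq:log_roughly} — controlled by the $L^2(B_R)$ convergence and the uniform $H^s$ bound — that prevents it. The complementary large-amplitude region is handled by the superlinear power $|w|^{2(1+\delta)}$ via the Sobolev embedding, completing the equi-integrability estimate and hence the proof.
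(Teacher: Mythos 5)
Your proof is correct, but it follows a genuinely different route from the paper's. The paper does not decompose the difference; it invokes the quantitative pointwise inequality of \cite[Lemma~A.2]{HO}: for $\alpha\in(0,1)$ there is $C(\alpha)>0$ such that for all $u,v\in\C$ and $\eps\in(0,1)$,
\begin{equation*}
  |v \log (|v|+\eps)-u \log |u|| \leq \eps+|u-v|
  +C(\alpha)\(1+|u|^{1-\alpha} \log^{+}|u|+|v|^{1-\alpha} \log^{+}|v|\)|u-v|^{\alpha},
\end{equation*}
applied with $\alpha=1/2$; the resulting product term is then estimated by H\"older and Sobolev,
\begin{equation*}
  \left\| |u_{\eps}|^{\frac12+\delta}|u_{\eps}-u|^{1/2}\right\|_{L^{2}(\Omega)}^{2}
  \le \|u_{\eps}\|_{L^{2+4\delta}}^{1+2\delta}\,\|u_{\eps}-u\|_{L^{2}(\Omega)}
  \lesssim \|u_{\eps}\|_{H^{s}}^{1+2\delta}\,\|u_{\eps}-u\|_{L^{2}(\Omega)},
\end{equation*}
so that the lemma follows directly from \eqref{MT} and \eqref{converge in L^2_loc}, with an explicit modulus of convergence (essentially $\eps+\|u_\eps-u\|_{L^2(\Omega)}^{1/2}$) and no subsequence extraction. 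You instead peel off the $\eps$-shift with the elementary uniform bound $|g_\eps(w)-g(w)|\le 2\eps$ --- a clean observation the paper never states --- and handle the remaining difference $g(u_\eps)-g(u)$ softly: a.e.\ convergence along subsequences, equi-integrability of $|g(u_\eps)|^2$ on $B_R$ obtained from \eqref{ineq:log_roughly}, the uniform bound \eqref{MT}, Sobolev embedding and H\"older on the bounded ball, then Vitali's theorem and the sub-subsequence principle. Both proofs rest on the same structural inputs (the convergence \eqref{converge in L^2_loc}, the uniform $H^s$ bound, and a bound taming the logarithm both at the origin and at infinity). What the paper's route buys is quantitativeness --- an explicit rate, and no need to pass to subsequences; what yours buys is self-containedness, replacing the H\"older-type inequality with logarithmic weights imported from \cite{HO} by elementary calculus and standard measure theory.
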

\begin{proof}
    We show that for any $\Omega \subset \subset \mathbb{R}^{d}$ and $t \in \mathbb{R}$,  
    $$
u_{\eps}(t) \log \(\left|u_{\eps}(t)\right|+\eps\) \to u(t) \log |u(t)| \quad \text { in } L^{2}(\Omega) \quad \text { as } \eps \downarrow 0 \text {. }
$$
In view of \cite[Lemma~A.2]{HO}, we know that for $\alpha\in (0,1)$, there exists $C(\alpha)>0$ such that for all $u,v\in \mathbb C$, $\eps\in (0,1)$
\begin{align*}
|v \log (|v|+\eps)-u \log | u|| \leq & \eps+|u-v|+C(\alpha) \times \\
& \(1+|u|^{1-\alpha} \log ^{+}|u|+|v|^{1-\alpha} \log ^{+}|v|\)|u-v|^{\alpha},
\end{align*}
where $\log ^{+} x:=\max (\log x, 0)$.
Hence, for any $\delta>0$ small, there exists $C(\delta)>0$ such that 
$$
\begin{aligned}
|u_\eps \log (|u_\eps|+\eps)-u \log | u|| \leq & \eps+|u_\eps-u|+C(\alpha) \times \\
& \(1+|u_\eps|^{\frac{1}{2}+\delta}+|u|^{\frac{1}{2}+\delta} \)|u_\eps-u|^{1/2}.
\end{aligned}
$$
Fixing $\delta>0$ sufficiently small so that $H^s(\R^d)\subset
L^{2+4\delta}(\R^d)$, we have
\begin{align*}
\left\|\left|u_{\eps}\right|^{\frac{1}{2}+\delta}\left|u_{\eps}-u\right|^{1
  / 2}\right\|_{L^{2}(\Omega)}^{2} &
    =\int_{\Omega}\left|u_{\eps}\right|^{1+2 \delta}
   \left|u_{\eps}-u\right| \le
   \left\|u_{\eps}\right\|_{L^{2+4 \delta}}^{1 +2\delta}
  \left\|u_{\eps}-u\right\|_{L^{2}(\Omega)} \\
& \lesssim \left\|u_{\eps}\right\|_{H^{s}}^{1+2\delta}\left\|u_{\eps}-u\right\|_{L^{2}(\Omega)}.
\end{align*}
Therefore, the results follows from \eqref{MT} and \eqref{converge in L^2_loc}.
\end{proof}

From \eqref{approxi} it follows that for every $\varphi\in
C_c^\infty({\mathbb R}^d)$ and every $\phi\in C_c^1(\mathbb R)$, 
\begin{align*}
\int_{\mathbb{R}}\(i u_{\eps}, \psi\)_{L^{2}} \phi^{\prime}(t) d t & =-\int_{\mathbb{R}}\left\langle i \d_{t} u_{\eps}, \psi\right\rangle_{H^{-s}, H^{s}} \phi(t) d t \\
& =-\int_{\mathbb{R}}\left\langle(-\Delta)^s u_{\eps}+2 \lambda u_{\eps} \log \(\left|u_{\eps}\right|+\eps\), \psi\right\rangle_{H^{-s}, H^{s}} \phi(t) d t \\
& =-\int_{\mathbb{R}}\left\{\((-\Delta)^{s/2} u_{\eps}, (-\Delta)^{s/2} \psi\)_{L^{2}}+\(\lambda g_{\eps}\(u_{\eps}\), \psi\)_{L^{2}}\right\} \phi(t) d t.
\end{align*}
From \eqref{weak converg in H^s}, $u_\eps(t)\rightharpoonup
u(t)$ in $H^s({\mathbb R}^d)$. In view of Lemma~\ref{converg g in
  L^2_loc}, taking the limit $\eps \downarrow 0$ yields
$$
\int_{\mathbb{R}}(i u, \psi)_{L^{2}} \phi^{\prime}(t) d t=-\int_{\mathbb{R}}\left\{((-\Delta)^{s/2} u, (-\Delta)^{s/2} \psi)_{L^{2}}+(\lambda g(u), \psi)_{L^{2}}\right\} \phi(t) d t.
$$
It can be easily verified for any $\Omega \subset \subset \mathbb{R}^{d}$,
$$
u \in L_{\mathrm{loc}}^{\infty}\(\mathbb{R}, H^{s}\(\mathbb{R}^{d}\)\) \cap W_{\mathrm{loc}}^{s, \infty}\(\mathbb{R}, H^{-s}(\Omega)\)
$$
and 
\begin{equation}\label{eq:H^-s}
    i \d_{t} u-(-\Delta)^s u=\lambda g(u) \quad \text { in } H^{-s}(\Omega) ,
\end{equation}
for almost all $t\in \R$.

\subsection{Uniqueness and regularity}
Following \cite[Lemme~2.2.1]{CaHa80}, we have:
\begin{lemma}
    Assume that, for some $T>0$,  $u, v \in
    L_{T}^{\infty}\(H^{s}\(\mathbb{R}^{d}\)\)$
    solve \eqref{eq:logNLS} in the distribution sense. Then $u=v$. 
\end{lemma}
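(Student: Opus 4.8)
The plan is to run the classical $L^2$ stability estimate for Schr\"odinger-type equations, following \cite[Lemme~2.2.1]{CaHa80}, with the logarithmic nonlinearity tamed by Lemma~\ref{log}. Set $w=u-v$. Since $u$ and $v$ both solve \eqref{eq:logNLS}, they share the initial datum, and $w$ solves
\begin{equation*}
  \(i\d_t-(-\Delta)^s\)w=\lambda\(g(u)-g(v)\),\qquad w_{\mid t=0}=0,
\end{equation*}
in the distribution sense, with $g(u)=2u\log|u|$. First I would record the time regularity: from $u,v\in L^\infty_T H^s$ and \eqref{ineq:log_roughly} together with the Sobolev embedding $H^s\hookrightarrow L^{2+\delta}$, one checks that $g(u)-g(v)\in L^\infty_T H^{-s}$ (through $L^r\hookrightarrow H^{-s}$ for a suitable $r$), so that $\d_t w\in L^\infty_T H^{-s}$. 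Hence $w\in L^\infty_T H^s\cap W^{1,\infty}_T H^{-s}$, and by interpolation $w\in C_T(L^2)$, which both gives meaning to $w(0)=0$ and makes $t\mapsto\|w(t)\|_{L^2}^2$ absolutely continuous on $[-T,T]$.

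Next I would use the energy identity
\begin{equation*}
  \frac{d}{dt}\|w(t)\|_{L^2}^2=2\RE\<\d_t w(t),w(t)\>_{H^{-s},H^s}\qquad\text{for a.e. }t,
\end{equation*}
and substitute the equation. The contribution of the fractional Laplacian is $2\RE\<-i(-\Delta)^s w,w\>_{H^{-s},H^s}=2\IM\|(-\Delta)^{s/2}w\|_{L^2}^2=0$ by self-adjointness, exactly as in the proof of Lemma~\ref{lem:prior estimate}. There remains the nonlinear contribution $2\lambda\,\IM\<g(u)-g(v),w\>_{H^{-s},H^s}$.

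The key point is then to identify this duality bracket with the honest integral $\int(g(u)-g(v))\bar w$: since $g(u)$ and $g(v)$ are genuine functions and, by \eqref{ineq:log_roughly} together with $H^s\hookrightarrow L^{2+\delta}$, the integrand $(g(u)-g(v))\bar w$ belongs to $L^1(\R^d)$, the bracket coincides with this absolutely convergent integral. I can then apply Lemma~\ref{log} with $\eps=\mu=0$ pointwise in $x$, which yields $|\IM(g(u)-g(v))\bar w|\le 2|w|^2$, whence
\begin{equation*}
  \frac{d}{dt}\|w(t)\|_{L^2}^2\le 4|\lambda|\,\|w(t)\|_{L^2}^2.
\end{equation*}
Gronwall's lemma together with $\|w(0)\|_{L^2}=0$ forces $w\equiv 0$ on $[0,T]$, and running the argument backward in time gives $w\equiv 0$ on $[-T,T]$, i.e. $u=v$.

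I expect the only genuine obstacle to be the low time-regularity bookkeeping: justifying the energy identity rigorously for $w\in L^\infty_T H^s\cap W^{1,\infty}_T H^{-s}$ (rather than for a smooth solution), and identifying the $H^{-s}$--$H^s$ pairing with the convergent integral so that the pointwise inequality of Lemma~\ref{log} applies. This is precisely the delicate step treated in \cite{CaHa80}; everything else reduces to the standard $L^2$ contraction computation, the logarithmic nonlinearity causing no additional difficulty thanks to Lemma~\ref{log}.
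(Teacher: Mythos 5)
Your overall plan (an $L^2$ contraction estimate via Lemma~\ref{log} plus Gronwall) is the right one, but it has a genuine gap at the very first step, and that gap is precisely what the paper's localization is designed to repair. You claim that $g(u)-g(v)\in L^{\infty}_{T}H^{-s}(\R^d)$, hence $\d_t w\in L^{\infty}_{T}H^{-s}(\R^d)$, and later that $(g(u)-g(v))\bar w\in L^1(\R^d)$. Both claims fail, for the same reason: the logarithm is sublinear near the origin, and $H^s(\R^d)$ functions carry no spatial decay information. Indeed \eqref{ineq:log_roughly} gives $|g(u)|\lesssim |u|^{1-\delta}+|u|^{1+\delta}$; the superlinear part is harmless, but the sublinear part only lies in $L^{2/(1-\delta)}(\R^d)$, and the embedding $L^{2/(1-\delta)}\hookrightarrow H^{-s}$ would require $H^s\hookrightarrow L^{2/(1+\delta)}$ with $2/(1+\delta)<2$, which is false on $\R^d$. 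Concretely, a smooth $u\in H^s$ behaving like $\langle x\rangle^{-d/2}\bigl(\log\langle x\rangle\bigr)^{-1/2-\eta}$ with $0<\eta<1/2$ satisfies $u\log|u|^2\notin L^2(\R^d)$ and even $u\log|u|^2\notin H^{-s}(\R^d)$: testing against bumps $\chi(\cdot/R)$, whose $H^s$ norm is $O(R^{d/2})$, the pairing grows like $R^{d/2}(\log R)^{1/2-\eta}$. Similarly, H\"older only puts $|u|^{1-\delta}|w|$ in $L^{2/(2-\delta)}$, not in $L^1$; only the \emph{imaginary part} of $(g(u)-g(v))\bar w$ is integrable, thanks to Lemma~\ref{log}, but that information cannot be used before the global pairing $\<\d_t w,w\>$ has been given a meaning. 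This is exactly why Theorem~\ref{theo:Hs} formulates the equation only in $H^{-s}(\Omega)$ for bounded open sets $\Omega$: no global $H^{-s}(\R^d)$ formulation is available, so the energy identity over $\R^d$ that your proof rests on cannot even be stated. Deferring this point to \cite{CaHa80} does not close the gap, since \cite{CaHa80} (like the present paper) never works with the global pairing.

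The paper's proof avoids the issue by estimating the localized quantity $\left\|\zeta_R(u-v)(t)\right\|_{L^2}^2$, with $\zeta_R=\zeta(\cdot/R)$ a cut-off: since $g(u)-g(v)\in L^2_{\mathrm{loc}}$, all pairings against $\zeta_R^2(u-v)$ are legitimate, and Lemma~\ref{log} applies pointwise exactly as you intended. The price is that $\zeta_R^2$ does not commute with $(-\Delta)^s$, so a commutator term $\IM\bigl((-\Delta)^{s/2}w,(-\Delta)^{s/2}(\zeta_R^2 w)\bigr)$ appears; it is controlled by the fractional Leibniz rule (Lemma~\ref{lem:li19}) as $O(R^{-s})\|w\|_{\dot H^s}\|w\|_{L^2}\le C(M)R^{-s}$. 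This is the one genuinely fractional ingredient, replacing the elementary bound $\|\nabla(\zeta_R^2)\|_{L^\infty}=O(1/R)$ available when $s=1$. Gronwall then gives \eqref{ineq:zetaL^2} with $\eps=\mu=0$, namely $\left\|\zeta_R(u-v)(t)\right\|_{L^2}^2\le e^{4|\lambda|T}\bigl(\|\zeta_R(u(0)-v(0))\|_{L^2}^2+C(M)TR^{-s}\bigr)$, and letting $R\to\infty$ via Fatou's lemma kills the error term and yields $u=v$. So the cut-off is not low-regularity bookkeeping: it is the core of the argument, and without it (or some substitute controlling spatial tails) your proof does not go through.
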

\begin{proof}
    We set
    $$
M:=\max \left\{\|u\|_{L_{T}^{\infty}\(H^{s}\)},\|v\|_{L_{T}^{\infty}\(H^{s}\)}\right\} .
$$
As mentioned above, $u, v$ satisfy the equation in the sense of
\eqref{eq:H^-s}. Resuming the cut-off function $\zeta_{R}$, and the
computations from Section~\ref{sec:construct1} (with $u_\eps$ replaced
by $u$ and $u_\mu$ replaced by $v$), Gronwall Lemma yields, like for
\eqref{ineq:zetaL^2} (with now $\eps=\mu=0$),
$$
\left\|\zeta_{R}(u-v)(t)\right\|_{L^{2}}^{2} \leq e^{4|\lambda| T} \( \|\zeta_R(u(0)-v(0))\|^2_{L^2}+\frac{C(M)}{R^s}T \)\quad \text { for all } t \in[-T, T].
$$
By Fatou's Lemma,
$$
\|(u-v)(t)\|_{L^2}^2 \leq \liminf _{R \to \infty}\left\|\zeta_{R}(u-v)(t)\right\|_{L^{2}}^{2} \leq 0,
$$
for all $t\in [-T,T]$. Therefore, $u=v$ on $[-T,T]$.
\end{proof}
Continuity in time and strong $L^2$ convergence are established like
in the proof of \cite[Lemma 2.10]{HO}. 
\begin{lemma}\label{converg in L^2}
$u \in C_{w}\(\mathbb{R}, H^{s}\(\mathbb{R}^{d}\)\) \cap C\(\mathbb{R}, L^{2}\(\mathbb{R}^{d}\)\)$ and
$$
u_{\eps}(t) \to u(t) \quad \text { in } L^{2}\(\mathbb{R}^{d}\).
$$

\end{lemma}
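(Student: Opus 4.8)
The plan is to split the statement into three pieces: weak continuity in $H^s$, conservation of the $L^2$-norm along $u$, and the upgrade of the already known weak/local convergences to strong global ones. Both the continuity statement $u\in C(\R,L^2)$ and the strong convergence $u_\eps(t)\to u(t)$ will drop out, through elementary Hilbert space identities, once mass conservation for the limit $u$ is in hand, so the real work is concentrated there.

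First I would record that $u\in C(\R,L^2_{\rm loc})$: by \eqref{converge in L^2_loc} we have $u_\eps\to u$ in $C_T(L^2_{\rm loc})$ for every $T$, so $u$ is a locally uniform limit of the continuous maps $u_\eps$ and is itself continuous into $L^2(B_R)$ for every $R$. Since the uniform bound \eqref{MT} passes to $u$ (so $u\in L^\infty_{\rm loc}(\R,H^s)$ by Lemma~\ref{converg in H^s}), the standard weak-continuity argument in reflexive spaces applies: a map bounded in $H^s$ and continuous with values in $L^2_{\rm loc}$ (hence in $\mathcal D'$) is weakly continuous with values in $H^s$. This gives $u\in C_w(\R,H^s)$, and in particular $u\in C_w(\R,L^2)$.

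The heart of the matter is to show $\|u(t)\|_{L^2}=\|\varphi\|_{L^2}$ for all $t$. The naive identity $\tfrac{d}{dt}\|u\|_{L^2}^2=2\lambda\IM(g(u),u)_{L^2}=0$ is not licit, since for $u\in H^s$ with merely $L^2$ spatial decay the density $|u|^2\log|u|$ need not be integrable on $\{|u|<1\}$. To bypass this I would reuse verbatim the cut-off computation of Section~\ref{sec:construct1}, now with the single solution $u$: testing \eqref{eq:H^-s} (valid in $H^{-s}(\Omega)$, which suffices as $\zeta_R^2u$ is supported in $B_{2R}$) against $\zeta_R^2u$ and justifying the time differentiation by Lions' lemma yields
\[
\frac{d}{dt}\|\zeta_Ru(t)\|_{L^2}^2
=2\IM\bigl((-\Delta)^{s/2}u,[(-\Delta)^{s/2},\zeta_R^2]u\bigr)_{L^2}
+4\lambda\IM\!\int \zeta_R^2\,|u|^2\log|u|.
\]
On the bounded set $B_{2R}$ the last integrand is integrable by \eqref{ineq:log_roughly} and, being real, contributes nothing; the first term is controlled exactly as in Section~\ref{sec:construct1} via Lemma~\ref{lem:li19}, by $C\,\|u\|_{\dot H^s}\|u\|_{L^2}\|(-\Delta)^{s/2}\zeta_R^2\|_{L^\infty}\lesssim M_T^2/R^s$. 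Integrating over $[0,t]$ and letting $R\to\infty$, monotone convergence gives $\|u(t)\|_{L^2}^2=\|\varphi\|_{L^2}^2$.

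With mass conservation the rest is immediate. Since $u\in C_w(\R,L^2)$ and $t\mapsto\|u(t)\|_{L^2}$ is constant, hence continuous, the polarization identity $\|u(t)-u(t_0)\|_{L^2}^2=\|u(t)\|_{L^2}^2-2\RE(u(t),u(t_0))_{L^2}+\|u(t_0)\|_{L^2}^2\to0$ as $t\to t_0$ shows $u\in C(\R,L^2)$. Likewise, from the weak convergence $u_\eps(t)\rightharpoonup u(t)$ in $L^2$ of \eqref{weak converg in L^2} together with $\|u_\eps(t)\|_{L^2}=\|\varphi\|_{L^2}=\|u(t)\|_{L^2}$, the same identity gives $\|u_\eps(t)-u(t)\|_{L^2}\to0$. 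The only genuinely delicate step is the mass conservation one: the logarithmic nonlinearity rules out a direct global energy identity, and the spatial cut-off is precisely what simultaneously makes the nonlinear contribution real (hence inert) on bounded sets and keeps the commutator error of order $R^{-s}$.
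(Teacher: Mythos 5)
Your proposal is correct, and its peripheral steps coincide with the paper's: the weak $H^{s}$-continuity is obtained, exactly as in the paper, from $u\in C(\R,L^{2}_{\mathrm{loc}})$ together with the bound of Lemma~\ref{converg in H^s}, and the final upgrades (strong $L^{2}$-continuity, and strong convergence of $u_\eps(t)$) follow, as in the paper, from weak convergence plus equality of norms. Where you genuinely diverge is the central step, the conservation $\|u(t)\|_{L^{2}}=\|\varphi\|_{L^{2}}$. The paper's route is soft: weak lower semicontinuity and the conservation of mass for $u_\eps$ give $\|u(t)\|_{L^{2}}\le\|\varphi\|_{L^{2}}$, and the reverse inequality comes from ``uniqueness of solutions'' --- implicitly, one re-runs the construction with datum $u(t_0)$ at time $t_0$, the solution so produced again has mass bounded by that of its datum, and the uniqueness lemma identifies it with $u$, whence $\|\varphi\|_{L^{2}}\le\|u(t_0)\|_{L^{2}}$. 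You instead prove conservation directly on the limit equation: testing \eqref{eq:H^-s} against $\zeta_R^2u$, the nonlinear contribution $4\lambda\IM\int\zeta_R^2|u|^2\log|u|$ vanishes identically because the integrand is real (and integrable on $B_{2R}$ by \eqref{ineq:log_roughly}), while the commutator term is $O(M_T^2R^{-s})$ by the first case of Lemma~\ref{lem:li19}; integrating in time and letting $R\to\infty$ gives exact conservation, with no Gronwall argument and no appeal to uniqueness. This is a legitimate alternative; its only cost is the justification of the differentiation of $t\mapsto\|\zeta_R u(t)\|_{L^{2}}^2$ for a solution merely in $L^{\infty}_{\mathrm{loc}}(\R,H^{s})$ with $\d_t u\in L^{\infty}_{\mathrm{loc}}(\R,H^{-s}(\Omega))$, i.e. a Lions--Magenes-type lemma, which is precisely the same kind of justification the paper itself invokes when it ``resumes the computations of Section~\ref{sec:construct1}'' for weak solutions in its uniqueness lemma, so you assume nothing beyond what the paper does. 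In short: the paper's argument leans on uniqueness and avoids any PDE computation on $u$ at this stage; yours is more self-contained and makes the mechanism of conservation explicit (reality of the logarithmic density plus a commutator error vanishing as $R\to\infty$), and it would remain available even in situations where uniqueness had not yet been established.
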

\begin{proof}
        First we note that $u \in C_{w}\(\mathbb{R},
          H^{s}\(\mathbb{R}^{d}\)\)$. Indeed this easily
        follows from Lemma \ref{converg in H^s} and $u \in
        C\(\mathbb{R}, L_{\text {loc
            }}^{2}\(\mathbb{R}^{d}\)\)$. Next, we obtain
        from Lemma~\ref{lem:exist-approx} and \eqref{weak converg in L^2} that
    $$
\|u(t)\|_{L^{2}}^{2} \leq \liminf _{\eps \to 0}\left\|u_{\eps}(t)\right\|_{L^{2}}^{2}=\|\varphi\|_{L^{2}}^{2} \quad \text { for all } t \in \mathbb{R} \text {. }
$$
Uniqueness of solutions yields that
\begin{equation}\label{eq:u_L^2}
   \|u(t)\|_{L^{2}}^{2}=\|\varphi\|_{L^{2}}^{2} \quad \text { for all } t \in \mathbb{R}
\end{equation}
As $u \in C_{w}\(\mathbb{R},
  L^{2}\(\mathbb{R}^{d}\)\)$, we deduce that $u \in
C\(\mathbb{R}, L^{2}\(\mathbb{R}^{d}\)\)$. Since no
mass is lost in the weak convergence \eqref{weak converg in L^2}, the
convergence is strong in $L^2$. 
\end{proof}

\begin{lemma}\label{u in C(H^s)}
    $u \in C\(\mathbb{R}, H^{s}\(\mathbb{R}^{d}\)\)$.
\end{lemma}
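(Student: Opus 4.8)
The plan is to upgrade the weak continuity $u\in C_w(\R,H^s)$, already obtained in Lemma~\ref{converg in L^2}, to strong continuity by proving that $t\mapsto \|u(t)\|_{H^s}$ is continuous. Indeed, in the Hilbert space $H^s(\R^d)$, weak continuity together with continuity of the norm implies strong continuity: if $t_n\to t$, then $u(t_n)\rightharpoonup u(t)$ weakly, and if moreover $\|u(t_n)\|_{H^s}\to\|u(t)\|_{H^s}$, the polarization identity $\|u(t_n)-u(t)\|_{H^s}^2=\|u(t_n)\|_{H^s}^2-2\RE\langle u(t_n),u(t)\rangle_{H^s}+\|u(t)\|_{H^s}^2$ forces $u(t_n)\to u(t)$ strongly. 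Since weak continuity already yields the lower semicontinuity $\|u(t)\|_{H^s}\le\liminf_{\tau\to t}\|u(\tau)\|_{H^s}$, it remains only to establish the matching upper semicontinuity $\limsup_{\tau\to t}\|u(\tau)\|_{H^s}\le\|u(t)\|_{H^s}$.

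The key ingredient is the a priori estimate of Lemma~\ref{lem:prior estimate}, which I would first pass to the limit at the initial time. Since $u_\eps(0)=\varphi=u(0)$ for every $\eps$, Lemma~\ref{lem:prior estimate} and the conservation of mass (Lemma~\ref{lem:exist-approx}) give $\|u_\eps(t)\|_{H^s}^2\le e^{4|\lambda||t|}\|\varphi\|_{H^s}^2$, and the weak lower semicontinuity of the $H^s$ norm under the convergence \eqref{weak converg in H^s} then yields
\[
\|u(t)\|_{H^s}^2\le\liminf_{\eps\downarrow0}\|u_\eps(t)\|_{H^s}^2\le e^{4|\lambda||t|}\|u(0)\|_{H^s}^2,\qquad t\in\R .
\]
To obtain the analogous bound based at an arbitrary time $t_0$, I would restart the construction from the datum $\psi:=u(t_0)\in H^s$: the corresponding solution furnished by Lemmas~\ref{lem:exist-approx}--\ref{converg in L^2} satisfies the same estimate with $\varphi$ replaced by $\psi$, and by the uniqueness property established above together with the time-translation invariance of the autonomous equation \eqref{eq:logNLS}, it coincides with $t\mapsto u(t+t_0)$. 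This produces
\[
\|u(\tau)\|_{H^s}^2\le e^{4|\lambda||\tau-t_0|}\|u(t_0)\|_{H^s}^2\qquad\text{for all }\tau\in\R .
\]

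Letting $\tau\to t_0$ in the last inequality gives $\limsup_{\tau\to t_0}\|u(\tau)\|_{H^s}^2\le\|u(t_0)\|_{H^s}^2$, which is exactly the desired upper semicontinuity. Combined with the lower semicontinuity coming from weak continuity, this shows that $t\mapsto\|u(t)\|_{H^s}$ is continuous, and hence, by the Hilbert-space argument of the first paragraph, that $u\in C(\R,H^s(\R^d))$. The step I expect to require the most care is the restart argument: one must verify that the solution constructed from the datum $u(t_0)$ is genuinely the time-translate $u(\cdot+t_0)$, which hinges on applying the uniqueness statement in the class $L^\infty_T H^s$ to both functions on a neighbourhood of $t_0$. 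The remaining semicontinuity bookkeeping is then routine.
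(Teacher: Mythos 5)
Your proposal is correct and follows essentially the same route as the paper: the a priori exponential estimate of Lemma~\ref{lem:prior estimate} passed to the limit gives the upper semicontinuity of the norm, weak continuity gives the lower semicontinuity, and norm convergence plus weak convergence yields strong continuity in the Hilbert space $H^s$. The only difference is cosmetic: the paper reduces to continuity at $t=0$ with the restart/translation argument left implicit (and works with the $\dot H^s$ seminorm plus the strong $L^2$ continuity of Lemma~\ref{converg in L^2}), whereas you spell out the uniqueness-based restart at an arbitrary $t_0$ and use the full $H^s$ norm via mass conservation.
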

\begin{proof}
    We just need to show the continuity $t \mapsto u(t) \in H^{s}\(\mathbb{R}^{d}\)$ at $t=0$. It follows  from \eqref{ineq:prior estimate}, \eqref{weak converg in H^s}, and the weak lower semicontinuity of the norm that
\[
\|u(t)\|_{\dot{H^{s}}}^{2} \leq e^{4|\lambda||t|}\|\varphi\|_{\dot{H^{s}}}^{2} .
\]
Passing to the limit as $t\to 0$ we have
$$
\limsup _{t \to 0}\|u(t)\|_{\dot{H^{s}}}^{2} \leq\|\varphi\|_{\dot{H^{s}}}^{2}.
$$
On the other hand, it follows from the weak continuity $t \mapsto u(t) \in H^{s}\(\mathbb{R}^{d}\)$ at $t=0$ that
$$
\|\varphi\|_{\dot{H^{s}}}^{2} \leq \liminf _{t \to 0}\|u(t)\|_{\dot{H^{s}}}^{2}.
$$
So we obtain
$$
\lim _{t \to 0}\|u(t)\|_{\dot{H^{s}}}^{2}=\|\varphi\|_{\dot{H^{s}}}^{2}.
$$
Therefore, the weak convergence in \eqref{weak converg in H^s} is
actually strong. 
\end{proof}

\subsection{Construction of solutions in $W_{1}^s$}
We now assume that $\varphi \in W_{1}^s \subset H^{s}\(\mathbb{R}^{d}\)$. From the dominated convergence theorem we have
$$
E_{\eps}(\varphi) \to E(\varphi) \quad \text { as } \eps \downarrow 0,
$$
recalling that $E_{\eps}(\varphi)$ and $E(\varphi)$ are defined
in \eqref{eq:E_varepsilon} and \eqref{eq:energy}, respectively.
Let $\theta \in C_{c}^{1}(\mathbb{C},
\mathbb{R})$ satisfying 
\[
\theta(z)=\left\{\begin{array}{ll}
1 & \text { if }|z| \leq 1 / 4, \\
0 & \text { if }|z| \geq 1 / 2,
\end{array} \quad 0 \leq \theta(z) \leq 1 \quad \text { for } z \in \mathbb{C},\right.
\]
and set, for $\eps>0$,
\begin{align*}
F_{1 \eps}(u) & =\theta(u)|u|^{2} \log
                       \((|u|+\eps)^{2}\), \quad F_{2 \eps}(u)=(1-\theta(u))|u|^{2} \log \((|u|+\eps)^{2}\), \\
F_{1}(u) & =\theta(u)|u|^{2} \log \(|u|^{2}\),\quad  F_{2}(u)=(1-\theta(u))|u|^{2} \log \(|u|^{2}\) .
\end{align*}
In the subsequent discussion, we confine the range of $\eps$ to  $(0, 1/2)$. The energy expressed in equation \eqref{approxi} is denoted as
$$
E_{\eps}(u)=\frac{1}{2} \int|(-\Delta)^{s/2} u|^{2}+\frac{\lambda}{2} \int F_{1 \eps}(u)+\frac{\lambda}{2} \int F_{2 \eps}(u)-\frac{\lambda}{2} \int \mu_{\eps}(|u|) .
$$
Taking $\delta>0$ sufficiently small, 
\begin{equation}\label{ineq:F_2}
    \int\left|F_{2}(u)\right| \lesssim \int |u|^{2+\delta} \lesssim
    \(\|u\|_{L^2}^{1-\eta}\|u\|_{\dot H^s}^\eta\)^{2+\delta}, \quad
    \eta= \frac{d}{s}\( \frac{1}{2}-\frac{1}{2+\delta}\)\in (0,1).
\end{equation}
In particular,
\begin{equation*}
  \text{for }u \in H^{s}\(\R^{d}\),\quad u \in
    W_{1}^s \Longleftrightarrow \int\left|F_{1}(u)\right|<\infty .
\end{equation*}

\begin{lemma}\label{lem:mu&F converg}
  For all $t \in \mathbb{R}$ we have, as $\eps\to 0$,
  \begin{equation*}
      \int \mu_{\eps}\(\left|u_{\eps}(t)\right|\)
      \to \int|u(t)|^{2}, \quad \int F_{2
        \eps}\(u_{\eps}(t)\) \to \int
      F_{2}(u(t)). 
  \end{equation*}
\end{lemma}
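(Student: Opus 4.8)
The plan is to derive both convergences from three facts already established: the strong convergence $u_\eps(t)\to u(t)$ in $L^2(\R^d)$ (Lemma~\ref{converg in L^2}); the uniform bound $\|u_\eps(t)\|_{H^s}\le M_T$ from \eqref{MT}, which via Sobolev embedding yields a uniform bound $\|u_\eps(t)\|_{L^{2+2\delta}}\le C$ for $\delta>0$ small; and the conservation of mass $\int|u_\eps(t)|^2=\int|u(t)|^2=\|\varphi\|_{L^2}^2$ (Lemma~\ref{lem:exist-approx} and \eqref{eq:u_L^2}). I will also repeatedly use that a strongly $L^2$-convergent family is tight at infinity: from $\int_{|x|>R}|u_\eps|^2\le 2\|u_\eps-u\|_{L^2}^2+2\int_{|x|>R}|u|^2$ one gets $\limsup_{\eps\to0}\int_{|x|>R}|u_\eps(t)|^2\le 2\int_{|x|>R}|u(t)|^2\to0$ as $R\to\infty$.

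For the first limit, the key elementary observation is the identity
\begin{equation*}
  \si^2-\mu_\eps(\si)=\int_0^\si\frac{2\eps\tau}{\tau+\eps}\,d\tau,\qquad \si\ge 0,
\end{equation*}
which gives $0\le\si^2-\mu_\eps(\si)\le\min(2\eps\si,\si^2)$. Since $\int|u_\eps(t)|^2=\int|u(t)|^2$, it suffices to prove $\int\(|u_\eps(t)|^2-\mu_\eps(|u_\eps(t)|)\)\to0$. I split $\R^d=B_R\cup B_R^c$: on $B_R$ I bound the integrand by $2\eps|u_\eps|$ and apply Cauchy--Schwarz to get $2\eps|B_R|^{1/2}\|\varphi\|_{L^2}\to0$ as $\eps\to0$; on $B_R^c$ I bound it by $|u_\eps|^2$ and invoke tightness. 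Letting first $\eps\to0$ and then $R\to\infty$ gives the claim.

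The second limit is the main difficulty, since $F_{2\eps}$ carries a logarithm and grows faster than $|u|^2$, so $L^2$ convergence does not by itself allow passing to the limit. Two observations make it manageable. First, the cut-off $1-\theta$ confines the integrand to $\{|u|\ge1/4\}$, where the logarithm stays bounded below (no singularity coming from $\{u\to0\}$), and for $\eps<1/2$ one has $|u|+\eps\le 3|u|$, whence the uniform bound $|F_{2\eps}(u)|\lesssim |u|^2(1+\log^+|u|)\lesssim|u|^{2+\delta}$ on the support. Second, extracting from $u_\eps(t)\to u(t)$ in $L^2$ an a.e.-convergent subsequence and using the continuity of $\theta$ and of $z\mapsto|z|^2\log(|z|^2)$ on $\{|z|\ge1/4\}$ gives $F_{2\eps}(u_\eps)\to F_2(u)$ a.e.

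To conclude I will apply the Vitali convergence theorem along this subsequence. Uniform integrability follows from $\int_A|F_{2\eps}(u_\eps)|\lesssim\int_A|u_\eps|^{2+\delta}\le\|u_\eps\|_{L^{2+2\delta}}^{2+\delta}|A|^{\delta/(2+2\delta)}$ by H\"older, which is small uniformly in $\eps$ when $|A|$ is small; tightness follows from the interpolation $\int_{|x|>R}|u_\eps|^{2+\delta}\le\(\int_{|x|>R}|u_\eps|^2\)^{1/2}\|u_\eps\|_{L^{2+2\delta}}^{1+\delta}$, whose first factor tends to $0$ uniformly in small $\eps$ as $R\to\infty$. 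Since $|F_2(u)|\lesssim|u|^{2+\delta}\in L^1$, Vitali gives $\int F_{2\eps}(u_\eps)\to\int F_2(u)$ along the subsequence; as the limit is independent of the subsequence, the full convergence as $\eps\to0$ follows. The hard part, as indicated, is precisely this last step: upgrading the merely $L^2$-strong convergence to convergence of the super-quadratic nonlinear quantity, which the uniform $L^{2+2\delta}$ bound and the Vitali argument deliver.
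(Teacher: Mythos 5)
Your proof is correct, and the key verifications hold up (the identity $\sigma^2-\mu_\eps(\sigma)=\int_0^\sigma \frac{2\eps\tau}{\tau+\eps}\,d\tau$, the uniform growth bound for $F_{2\eps}$ on the support of $1-\theta$, the H\"older exponents, the boundary case $|z|=1/4$ handled by continuity of $\theta$); but it takes a genuinely different route from the paper's. The paper disposes of the lemma by citing \cite[Lemma~2.13]{HO}, whose engine is the two-point estimate
\begin{equation*}
  \left|F_{2\eps}(z)-F_{2}(w)\right| \le C(\delta)\left(|z|^{1+\delta}+|w|^{1+\delta}\right)|z-w|, \qquad z,w\in\mathbb{C},
\end{equation*}
which, by Cauchy--Schwarz together with the uniform $L^{2+2\delta}$ bound (Sobolev embedding and \eqref{MT}) and the strong convergence of Lemma~\ref{converg in L^2}, gives directly
\begin{equation*}
  \int\left|F_{2\eps}(u_\eps)-F_2(u)\right| \lesssim \left(\|u_\eps\|_{L^{2+2\delta}}^{1+\delta}+\|u\|_{L^{2+2\delta}}^{1+\delta}\right)\|u_\eps-u\|_{L^2}\longrightarrow 0,
\end{equation*}
with no subsequence extraction, no tightness argument, and even a rate of convergence; the $\mu_\eps$ term can be treated in the same global spirit (e.g. $|\mu_\eps(\sigma_1)-\mu_\eps(\sigma_2)|\le(\sigma_1+\sigma_2)|\sigma_1-\sigma_2|$ plus dominated convergence for $\int\left(|u|^2-\mu_\eps(|u|)\right)$), again without localizing in space. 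You instead prove only the one-point bound $|F_{2\eps}(z)|\lesssim|z|^{2+\delta}$ on $\{|z|\ge 1/4\}$ and compensate with soft analysis: a.e. convergence along subsequences plus the Vitali convergence theorem, whose two hypotheses (uniform integrability and tightness at infinity) you correctly extract from the same uniform $L^{2+2\delta}$ bound and from the $L^2$-tightness inherited from strong $L^2$ convergence --- the tightness step being unavoidable since Vitali is applied on the infinite-measure space $\mathbb{R}^d$, and your ball/complement splitting plays the same role in the $\mu_\eps$ limit. Both arguments rest on the same three pillars (strong $L^2$ convergence, the bound \eqref{MT}, mass conservation), so the trade-off is clear: the paper's route is shorter and quantitative but needs the sharper difference estimate for $F_{2\eps}$; yours needs only a growth bound plus continuity, at the cost of the subsequence and tightness bookkeeping and of any rate.
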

  The proof of this lemma is found in\cite[Lemma 2.13]{HO},  and
  relies on the observation that for any $\delta \in(0,1)$ there
  exists $C(\delta)>0$ such that
  \begin{equation*}
    \left|F_{2 \eps}(z)-F_{2}(w)\right| \leq C(\delta)\(|z|^{1+\delta}+|w|^{1+\delta}\)|z-w| \quad \text { for all } z, w \in \mathbb{C}.
  \end{equation*}
\begin{proposition}
    Let $\lambda< 0$. Then, $u \in\(C \cap L^{\infty}\)\(\mathbb{R}, W_{1}^s\)$ and $E(u(t))= E(\varphi)$ for all $t \in \mathbb{R}$.
\end{proposition}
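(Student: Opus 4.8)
The plan is to combine the conservation of the approximate energy, Lemma~\ref{lem:exist-approx}, with the favourable sign of $\lambda$. Throughout I restrict to $\eps\in(0,1/2)$, so that on the support of $\theta$, where $|u_\eps|\le 1/2$, one has $|u_\eps|+\eps<1$ and hence $F_{1\eps}(u_\eps)\le 0$; consequently $\frac{\lambda}{2}\int F_{1\eps}(u_\eps)=\frac{|\lambda|}{2}\int|F_{1\eps}(u_\eps)|\ge 0$. The decisive first step is to upgrade the local-in-time bound \eqref{ineq:prior estimate} to a global one. Writing $E_\eps(u_\eps(t))=E_\eps(\varphi)$ with $\lambda=-|\lambda|$ and splitting the logarithmic integral according to $\{|u_\eps|+\eps<1\}$ and $\{|u_\eps|+\eps\ge 1\}$, the first region contributes a nonnegative term bounded below by $\frac{|\lambda|}{2}\int|F_{1\eps}(u_\eps(t))|$, the second is dominated by $\int|u_\eps|^{2+\delta}$ and hence, by \eqref{ineq:F_2} and the conservation of mass, by $C\|u_\eps(t)\|_{\dot H^s}^{\eta(2+\delta)}$ with $\eta(2+\delta)<2$ for $\delta$ small, and the term $\frac{|\lambda|}{2}\int\mu_\eps(|u_\eps|)\ge 0$ may simply be dropped. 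Since $E_\eps(\varphi)\to E(\varphi)$ stays bounded, Young's inequality absorbs the $\dot H^s$ factor and yields
\[
\frac14\|(-\Delta)^{s/2}u_\eps(t)\|_{L^2}^2+\frac{|\lambda|}{2}\int|F_{1\eps}(u_\eps(t))|\le C,
\]
uniformly in $t\in\R$ and $\eps\in(0,1/2)$. This is precisely where the assumption $\lambda<0$ enters.

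Passing to the limit is then straightforward: by \eqref{converge in L^2_loc} we have $u_\eps(t)\to u(t)$ a.e.\ along a subsequence, and $F_{1\eps}(z)\to F_1(z)$ pointwise, so Fatou's lemma gives $\int|F_1(u(t))|\le\liminf_\eps\int|F_{1\eps}(u_\eps(t))|\le C$ for every $t$. Together with \eqref{ineq:F_2} for the $F_2$ part and the fact that $u\in C(\R,H^s)$, this proves $u(t)\in W_1^s$ for all $t$, with a uniform bound, that is $u\in L^\infty(\R,W_1^s)$.

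For the energy, I would first establish $E(u(t))\le E(\varphi)$ for each fixed $t$ by taking $\liminf_{\eps\to0}$ in $E_\eps(u_\eps(t))=E_\eps(\varphi)$. The kinetic term is weakly lower semicontinuous thanks to \eqref{weak converg in H^s}; the term $\frac{\lambda}{2}\int F_{1\eps}(u_\eps)=\frac{|\lambda|}{2}\int|F_{1\eps}(u_\eps)|$ is nonnegative, so a.e.\ convergence and Fatou give $\liminf_\eps\frac{\lambda}{2}\int F_{1\eps}(u_\eps(t))\ge\frac{\lambda}{2}\int F_1(u(t))$; finally $\int F_{2\eps}(u_\eps(t))\to\int F_2(u(t))$ and $\int\mu_\eps(|u_\eps(t)|)\to\int|u(t)|^2$, as recalled above (see \cite[Lemma~2.13]{HO}). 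Superadditivity of $\liminf$ together with $E_\eps(\varphi)\to E(\varphi)$ then yields $E(u(t))\le E(\varphi)$. This inequality is valid for arbitrary $W_1^s$ data and for all $t\in\R$, since the approximation scheme is symmetric in time; as $u(t_1+\cdot)$ is, by the uniqueness result proved above, the solution issued from $u(t_1)\in W_1^s$, applying the inequality to it gives $E(u(t))\le E(u(t_1))$ for all $t,t_1$. Exchanging $t$ and $t_1$ produces the reverse inequality, whence $E(u(t))=E(\varphi)$ for all $t$.

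It remains to prove continuity in $W_1^s$, which is where the conservation of energy is used. Fix $t_0$. Since $u\in C(\R,H^s)$, we have $u(t)\to u(t_0)$ in $H^s$ as $t\to t_0$, hence in $L^2$ and in $L^{2+\delta}$, and a.e.\ along subsequences; in particular $\|(-\Delta)^{s/2}u(t)\|_{L^2}^2$, the mass $\int|u(t)|^2$, and $\int F_2(u(t))$ (via the pointwise bound on $F_2$ and H\"older) all converge to their values at $t_0$. Energy conservation then forces $\int F_1(u(t))\to\int F_1(u(t_0))$; since $F_1\le 0$ and $F_1(u(t))\to F_1(u(t_0))$ a.e., Scheff\'e's lemma upgrades this to convergence of $F_1(u(t))$ in $L^1$, and combined with the $H^s$ and $F_2$ convergences this is exactly convergence in $W_1^s$. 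Thus $u\in C(\R,W_1^s)$. The genuine difficulty throughout, as in \cite{HO}, is the region $\{|u|<1\}$, where the nonlinearity is sublinear and the density $F_1$ is not controlled by the $H^s$ norm; the sign $\lambda<0$ is what saves the argument, rendering $\frac{\lambda}{2}\int F_{1\eps}$ nonnegative so that it can be both absorbed into a global a priori bound and, via Fatou, provide the correct direction of semicontinuity, while the final passage from convergence of integrals to $L^1$ convergence is supplied by Scheff\'e's lemma.
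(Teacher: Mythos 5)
Your proof is correct and follows essentially the same route as the paper's: the sign $\lambda<0$ turns the $F_{1\eps}$ contribution into a nonnegative quantity, the $F_{2}$ part is absorbed via \eqref{ineq:F_2} and mass conservation, Fatou's lemma together with weak lower semicontinuity gives $u\in L^{\infty}(\R,W_1^s)$ and $E(u(t))\le E(\varphi)$, uniqueness plus time reversibility upgrades the inequality to conservation, and conservation combined with the $H^s$-continuity of $u$ yields continuity of the logarithmic integral and hence $W_1^s$-continuity. The only differences are cosmetic: you run the Young/Gagliardo--Nirenberg absorption at the level of the approximations $u_\eps$ (obtaining the uniform bound before passing to the limit, where the paper does it on $u$ after proving energy conservation), and you spell out via Scheff\'e's lemma the equivalence between continuity of $t\mapsto\int F_1(u(t))$ and continuity in $W_1^s$ that the paper only asserts.
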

\begin{proof}
    For $\eps \in(0,1 / 2)$, we have $F_{1 \eps}(u)\leq
    0$, and we can rewrite the  first two terms $E_{\eps}(u)$ as
    $$
\frac{1}{2} \int|(-\Delta)^{s/2} u|^{2}+\frac{\lambda}{2} \int F_{1 \eps}(u)=\frac{1}{2} \int|(-\Delta)^{s/2} u|^{2}+\frac{|\lambda|}{2} \int |F_{1 \eps}(u)|.
$$
The weak lower semicontinuity of the norm, Fatou's
lemma (for the second term), and  Lemma~\ref{lem:mu&F converg} imply
\begin{align*}
\frac{1}{2} \int|(-\Delta)^{s/2} u(t)|^{2}+\frac{|\lambda|}{2} \int\left|F_{1}(u(t))\right|& \leq \liminf _{\eps \to 0}\(E_{\eps}\(u_{\eps}(t)\)-\frac{\lambda}{2} \int F_{2 \eps}\(u_{\eps}(t)\)+\frac{\lambda}{2} \int \mu_{\eps}\(u_{\eps}(t)\)\) \\
& \leq E(\varphi)-\frac{\lambda}{2} \int F_{2}(u(t))+\frac{\lambda}{2} \int|u(t)|^{2},
\end{align*}
for all $t \in \mathbb{R}$. It implies that
$$
u(t) \in W_{1}^s, \quad E(u(t)) \leq E(\varphi) \quad \text { for all } t \in \mathbb{R}.
$$
Invoking Lemma~\ref{converg in L^2}, we obtain that the conservation
of the energy 
\begin{equation}\label{eq:E}
    E(u(t))=E(\varphi) \quad \text { for all } t \in \mathbb{R}.
\end{equation}
From inequality \eqref{ineq:F_2} with $(2+\delta)\eta<2$, and the identity \eqref{eq:u_L^2} we
get
\begin{equation*}
  \int|(-\Delta)^{s/2} u(t)|^{2}+\int\left|F_{1}(u(t))\right| \leq C\(
  E(\varphi),\|\varphi\|_{L^{2}}\),
\end{equation*}
for all $t \in \mathbb{R}$. Therefore we deduce that
\begin{equation*}
  u \in L^{\infty}\(\mathbb{R}, H^{s}\(\mathbb{R}^{d}\)\) \quad \text { and } \quad t \mapsto \int|u(t)|^{2} \log \(|u(t)|^{2}\) \in L^{\infty}(\mathbb{R}),
\end{equation*}
and thus $u \in L^{\infty}\(\mathbb{R}, W_{1}^s\)$. Moreover,
from \eqref{eq:E} and Lemma \eqref{u in C(H^s)}, we know
that
\begin{equation*}
  t \mapsto \int|u(t)|^{2} \log \(|u(t)|^{2}\) \in C(\mathbb{R}) \Longleftrightarrow u \in C\(\mathbb{R}, W_{1}^s\),
\end{equation*}
which completes the proof.
\end{proof}
\begin{proposition}
    Let $\lambda>0$. Then, $u \in C\(\mathbb{R}, W_{1}^s\)$.
\end{proposition}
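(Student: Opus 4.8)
The plan is to reduce the statement, as in the focusing case, to two facts: that $u(t)\in W_1^s$ for every $t\in\R$, and that the real-valued map $t\mapsto \int |u(t)|^2\log|u(t)|^2$ is continuous; since Lemma~\ref{u in C(H^s)} already provides $u\in C(\R,H^s)$, these two together give $u\in C(\R,W_1^s)$. For the membership I cannot reuse the weak lower semicontinuity argument of the focusing case, because here $F_{1\eps}\le 0$ enters the energy together with $\lambda>0$, i.e. with the unfavourable sign. Instead I start from the exact conservation $E_\eps(u_\eps(t))=E_\eps(\varphi)$ of Lemma~\ref{lem:exist-approx} and isolate the bad term:
\[
\frac{\lambda}{2}\int \left|F_{1\eps}(u_\eps(t))\right| = \frac12\left\|(-\Delta)^{s/2}u_\eps(t)\right\|_{L^2}^2+\frac{\lambda}{2}\int F_{2\eps}(u_\eps(t))-\frac{\lambda}{2}\int \mu_\eps(|u_\eps(t)|)-E_\eps(\varphi).
\]
Every term on the right is bounded uniformly in $\eps\in(0,1/2)$ and locally uniformly in $t$: the kinetic term by \eqref{ineq:prior estimate}, the $F_{2\eps}$-term by a bound of the type \eqref{ineq:F_2}, the $\mu_\eps$-term by $\mu_\eps(\sigma)\le\sigma^2$ together with the conservation of mass, and $E_\eps(\varphi)\to E(\varphi)$. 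Hence $\int|F_{1\eps}(u_\eps(t))|\le K_T$ for all $\eps$ and $|t|\le T$. Since $u_\eps(t)\to u(t)$ in $L^2$ by Lemma~\ref{converg in L^2} and $F_{1\eps}\to F_1$ uniformly, Fatou's lemma gives $u(t)\in W_1^s$ for every $t$, with the same locally uniform bound $K_T$.

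For the continuity I split $\int|u|^2\log|u|^2=\int F_1(u)+\int F_2(u)$. Since $z\mapsto F_2(z)$ is supported in $\{|z|\ge 1/4\}$ and satisfies $|F_2(z)|\lesssim |z|^{2+\delta}$ as in \eqref{ineq:log_roughly}, the embedding $u\in C(\R,H^s)\hookrightarrow C(\R,L^{2+\delta})$ (with $\delta>0$ small) makes $t\mapsto\int F_2(u(t))$ continuous. For the $F_1$-part, $F_1\le 0$ together with the strong continuity $u\in C(\R,L^2)$ and Fatou's lemma show that $t\mapsto\int F_1(u(t))$ is \emph{upper} semicontinuous. The remaining, and only non-routine, point is its \emph{lower} semicontinuity.

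This is where the main difficulty for $\lambda>0$ lies. Lower semicontinuity of $t\mapsto\int F_1(u(t))$ will follow from the conservation of energy $E(u(t))=E(\varphi)$, which in fact yields the full continuity of this map at once: indeed, since $\frac12\|(-\Delta)^{s/2}u(t)\|_{L^2}^2$, $\int F_2(u(t))$ and $\int|u(t)|^2$ are all continuous in $t$ (using \eqref{eq:u_L^2} and Lemma~\ref{u in C(H^s)}), energy conservation expresses $\int F_1(u(t))$ as a combination of continuous functions of $t$. The genuine obstruction is thus to establish this conservation: because the kinetic term (weakly lower semicontinuous) and the logarithmic term (only upper semicontinuous, since $F_{1\eps}\le0$) enter $E_\eps$ with opposite signs, passing to the limit $\eps\to0$ in $E_\eps(u_\eps(t))=E_\eps(\varphi)$ produces two semicontinuity inequalities pointing in opposite directions, which cannot be combined as in the focusing case; in particular the needed convergence $\int F_{1\eps}(u_\eps(t))\to\int F_1(u(t))$ does not follow from the strong $L^2$ convergence and the uniform bound $K_T$ alone. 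To close the argument I would follow the corresponding step of \cite{HO}, establishing this convergence directly; combined with Lemma~\ref{lem:mu&F converg} and the approximate conservation it forces $E(u(t))=E(\varphi)$, whence the continuity of $t\mapsto\int F_1(u(t))$ and finally $u\in C(\R,W_1^s)$.
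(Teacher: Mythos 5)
Your membership step (that $u(t)\in W_1^s$ with a locally uniform bound, via the approximate conservation $E_\eps(u_\eps(t))=E_\eps(\varphi)$, the uniform bounds, and Fatou's lemma) is correct and is essentially the paper's Step~1; likewise, the continuity of $t\mapsto\int F_2(u(t))$ and the upper semicontinuity of $t\mapsto\int F_1(u(t))$ (i.e.\ $\liminf_{t\to t_0}\int|F_1(u(t))|\ge\int|F_1(u(t_0))|$ by Fatou) are both in the paper. The gap is exactly at the point you flag as "the only non-routine point": you reduce the missing lower semicontinuity of $t\mapsto\int F_1(u(t))$ to the energy conservation $E(u(t))=E(\varphi)$, you correctly observe that this conservation cannot be extracted by passing to the limit in $E_\eps(u_\eps(t))=E_\eps(\varphi)$ (the two available semicontinuities point in opposite directions when $\lambda>0$), and then you defer the resolution to "the corresponding step of \cite{HO}". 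That deferral is not an argument: no proof of the convergence $\int F_{1\eps}(u_\eps(t))\to\int F_1(u(t))$ is given, and in fact this is not how the argument closes -- the paper never establishes energy conservation for $\lambda>0$ at all (it is proved only in the case $\lambda<0$).

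The idea you are missing is the following substitution. In the identity you already wrote, do not keep the kinetic term $\frac12\|(-\Delta)^{s/2}u_\eps(t)\|_{L^2}^2$, whose limit as $\eps\to0$ is unknown; instead majorize it by the Gronwall a priori estimate \eqref{ineq:prior estimate}, $\frac12\|(-\Delta)^{s/2}u_\eps(t)\|_{L^2}^2\le\frac12 e^{4|\lambda||t|}\|(-\Delta)^{s/2}\varphi\|_{L^2}^2$, \emph{before} passing to the limit. Fatou's lemma in $\eps$ together with Lemma~\ref{lem:mu&F converg} then yields, for every $t$,
\begin{equation*}
  \frac{|\lambda|}{2}\int|F_1(u(t))|\le -E(\varphi)+\frac12 e^{4|\lambda||t|}\|(-\Delta)^{s/2}\varphi\|_{L^2}^2+\frac{\lambda}{2}\int F_2(u(t))-\frac{\lambda}{2}\int|u(t)|^2 .
\end{equation*}
The exponential bound is crude for fixed $t$, but it is tight at $t=0$: letting $t\to0$ and using the continuity of $t\mapsto\int F_2(u(t))$ and mass conservation, the right-hand side tends to $-E(\varphi)+\frac12\|(-\Delta)^{s/2}\varphi\|_{L^2}^2+\frac{\lambda}{2}\int F_2(\varphi)-\frac{\lambda}{2}\int|\varphi|^2=\frac{|\lambda|}{2}\int|F_1(\varphi)|$. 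Hence $\limsup_{t\to0}\int|F_1(u(t))|\le\int|F_1(\varphi)|$, which combined with your Fatou inequality in the other direction gives continuity of $t\mapsto\int F_1(u(t))$ at $t=0$ (and at any time, by uniqueness and time translation), with no energy conservation needed. In short: the step that closes the proof is not a convergence statement for $\int F_{1\eps}(u_\eps(t))$, but the replacement of the kinetic term by the bound \eqref{ineq:prior estimate}, whose equality case at $t=0$ is precisely what makes the $\limsup$ argument work.
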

\begin{proof}
    \textbf{Step 1.} We show that $u \in
    L_{\mathrm{loc}}^{\infty}\(\mathbb{R}, W_{1}^s\)$. It follows from
    \eqref{eq:E_varepsilon} and \eqref{eq:E} that for  any $T>0$ and
    $t \in[-T, T]$, 
\begin{align*}
\frac{|\lambda|}{2} \int\left|F_{1 \eps}\(u_{\eps}(t)\)\right| & =-\frac{\lambda}{2} \int F_{1 \eps}\(u_{\eps}(t)\) \\
& =-E_{\eps}\(u_{\eps}(t)\)+\frac{1}{2} \int\left|(-\Delta)^{s/2}
     u_{\eps}(t)\right|^{2} \\
&\quad+\frac{\lambda}{2}\(\int F_{2 \eps}\(u_{\eps}(t)\)-\int \mu_{\eps}\(\left|u_{\eps}(t)\right|\)\) .
\end{align*}
Fatou's Lemma and \eqref{MT} imply
\begin{equation*}
  \frac{|\lambda|}{2} \int\left|F_{1}(u(t))\right| \leq \liminf _{\eps \to 0} \frac{|\lambda|}{2} \int\left|F_{1 \eps}\(u_{\eps}(t)\)\right| \leq -E(\varphi)+C\(M_{T}\),
\end{equation*}
for all $t \in[-T, T]$. This entails
\begin{equation*}
  t \mapsto \int|u(t)|^{2} \log \(|u(t)|^{2}\) \in L_{\mathrm{loc}}^{\infty}(\mathbb{R}),
\end{equation*}
hence the claim.
\smallbreak

\noindent\textbf{Step 2.} We show that $u \in C\(\mathbb{R},
  W_{1}^s\)$. We check  that the map $t \mapsto \int
F_{2}(u(t)) $ is continuous,  and then we need to show that so is $t
\mapsto \int F_{1}(u(t))$. As in the proof of Lemma~\ref{u in C(H^s)},
we consider continuity at $t=0$ only. Resuming the computation for the preceding paragraph, we derive
\begin{align*}
\frac{|\lambda|}{2} \int\left|F_{1 \eps}\(u_{\eps}(t)\)\right| &
                                                                 =-E_{\eps}\(u_{\eps}(t)\)+\frac{1}{2} \int\left|(-\Delta)^{s/2} u_{\eps}(t)\right|^{2}\\
&\quad +\frac{\lambda}{2}\(\int F_{2 \eps}\(u_{\eps}(t)\)-\int \mu_{\eps}\(\left|u_{\eps}(t)\right|\)\) \\
& \leq -E_{\eps}(\varphi)+\frac{1}{2}
                                                                                                             e^{4|\lambda||t|}\|(-\Delta)^{s/2} \varphi\|_{L^{2}}^{2}\\
&\quad+\frac{\lambda}{2}\(\int F_{2 \eps}\(u_{\eps}(t)\)-\int \mu_{\eps}\(\left|u_{\eps}(t)\right|\)\).
\end{align*}
In view of Fatou's Lemma and Lemma~\ref{lem:mu&F converg}, we infer
\begin{equation*}
  \frac{|\lambda|}{2} \int\left|F_{1}(u(t))\right| \leq -E(\varphi)+\frac{1}{2} e^{4|\lambda||t|}\|(-\Delta)^{s/2} \varphi\|_{L^{2}}^{2}+\frac{\lambda}{2} \int F_{2}(u(t))-\frac{\lambda}{2} \int|u(t)|^{2} .
\end{equation*}
Passing to the limit  $t\to 0$ yields
\begin{align*}
\limsup _{t \to 0} \frac{|\lambda|}{2} \int\left|F_{1}(u(t))\right| & \leq -E(\varphi)+\frac{1}{2}\|(-\Delta)^{s/2} \varphi\|_{L^{2}}^{2}+\frac{\lambda}{2} \int F_{2}(\varphi)-\frac{\lambda}{2} \int|\varphi|^{2} \\
& =-\frac{\lambda}{2} \int F_{1}(\varphi)=\frac{|\lambda|}{2} \int\left|F_{1}(\varphi)\right|.
\end{align*}
Thanks to Fatou's Lemma,
\begin{equation*}
  \int\left|F_{1}(\varphi)\right|\leq\liminf_{t\to 0} |F_1(u(t))|,
\end{equation*}
hence the proposition.
\end{proof}
Since regardless of the sign of $\lambda$, $u\in C(\R,W_1^s)$, arguing
like in  the proof of \cite[Lemma~2.6]{Caz83}, we infer
\begin{equation*}
  i \d_{t} u-(-\Delta)^s u=\lambda u \log {\(|u|^{2}\)} \quad \text {
    in } (W_{1}^s)^*. 
\end{equation*}

\subsection{The $H^1$ case}
\label{sec:H1}

To conclude the proof of  Theorem~\ref{theo:Hs}, we now assume
$\varphi\in H^1(\R^d)$. Since $0<s<1$, we already know that
\eqref{eq:logNLS} has a unique solution $u\in C(\R,H^s(\R^d))$. We
note that the solution $u_\eps$ to \eqref{approxi} is bounded in
$H^1(\R^d)$, uniformly on any time interval $[-T,T]$ and in $\eps\in
(0,1]$. Indeed, applying the gradient to \eqref{approxi} yields
\begin{equation*}
  i\d_t \nabla u_\eps - (-\Delta)^s\nabla u_\eps =2\lambda \nabla
  u_\eps \log\(|u_\eps|+\eps\) +2\lambda
  \frac{u_\eps}{|u_\eps|+\eps}\nabla |u_\eps|, 
\end{equation*}
and the standard $L^2$ estimate readily provides
\begin{equation*}
  \frac{d}{dt}\|\nabla u_\eps\|_{L^2}^2 \le 4|\lambda| \|\nabla
  u_\eps\|_{L^2} \|\nabla |u_\eps|\|_{L^2}\le 4|\lambda| \|\nabla
  u_\eps\|_{L^2}^2.
\end{equation*}
The conclusion of Theorem~\ref{theo:Hs} then follows from the same
arguments as above, when we proved that $u\in C(\R,H^s(\R^d))$.

\section{The Cauchy problem in the $H^{2s}$ regularity}
\label{sec:X}
In this section we show that if $\varphi \in X_\alpha^{2s}= H^{2s}\cap
\F(H^\alpha)$, then  the solution $u\in C(\R,H^s)$ provided by
Theorem~\ref{theo:Hs} actually belongs to $C_w\cap L^\infty_{\rm
  loc}(\R ,X_\alpha^{2s})$
(note the obvious
relation $X_\alpha^{2s}\subset H^s$). 
\smallbreak

The strategy is inspired by the classical one in the case of the
nonlinear Schr\"odinger equation, when $H^2$ regularity is addressed,
see \cite{Kato87} (see also \cite{CazCourant}): we first prove that $\d_t u\in
L^\infty_{\rm loc}(\R,L^2)$, and eventually use the equation,
\eqref{eq:logNLS}, to conclude that $(-\Delta)^su \in L^\infty_{\rm
  loc}(\R,L^2)$. The intermediate step consists in considering the
nonlinearity, to show that 
$  u\log |u|^2\in L^\infty_{\rm  loc}(\R,L^2)$: due to the singularity
of the logarithm at the origin, this is by no means obvious (in
particular, the information $u\in C(\R,H^s)$ and the Sobolev embedding
do not suffice to conclude). The first step is indeed:

\begin{lemma}\label{lem:dtu}
    Let  $\alpha>0$, $\varphi\in X_\alpha^{2s}$, and, for $\eps>0$, $u_\eps$ solve
    \eqref{approxi}. For all $t \in \mathbb{R}$ we have 
\begin{equation*}
  \left\|\d_{t} u_{\eps}(t)\right\|_{L^{2}}^{2} \leq
  e^{4|\lambda t|}\left\|\d_{t} u_{\eps}(0)\right\|_{L^{2}}^{2},
\end{equation*}
and there exists a map $C$ independent of $\eps\in (0,1)$ such that
\begin{equation*}
  \|\d_{t} u_{\eps}(0)\|_{L^{2}}\le C\(
\|\varphi\|_{H^{2s}},\|\<x\>^\alpha\varphi\|_{L^2}\). 
\end{equation*}
\end{lemma}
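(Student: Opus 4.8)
The plan is to establish the two assertions separately. The propagation bound I would obtain by differentiating \eqref{approxi} in time, but carried out through difference quotients so as not to presuppose any time regularity beyond $u_\eps\in C(\R,H^s)$; the uniform control of $\d_t u_\eps(0)$ I would read directly off the equation, and it is there that the finite momentum assumption $\varphi\in\F(H^\alpha)$ becomes indispensable.

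For the first assertion, fix $\eps>0$ and $h\in\R$ and set $v_h(t):=u_\eps(t+h)-u_\eps(t)$. Since $u_\eps\in C(\R,H^s)$ we have $v_h(t)\in H^s\subset L^2$, and, writing \eqref{approxi} as $i\d_t u_\eps-(-\Delta)^su_\eps=\lambda g_\eps(u_\eps)$, the increment solves
\begin{equation*}
  i\d_t v_h-(-\Delta)^sv_h=\lambda\(g_\eps(u_\eps(t+h))-g_\eps(u_\eps(t))\).
\end{equation*}
I would multiply by $\bar v_h$, integrate over $\R^d$ and take the imaginary part: the fractional Laplacian contributes nothing by self-adjointness, while the nonlinear term is exactly what Lemma~\ref{log} (applied with $\mu=\eps$) controls, giving $|\IM(g_\eps(a)-g_\eps(b))(\bar a-\bar b)|\le 2|a-b|^2$. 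This yields $\frac{d}{dt}\|v_h(t)\|_{L^2}^2\le 4|\lambda|\|v_h(t)\|_{L^2}^2$, whence by Gronwall $\|v_h(t)\|_{L^2}^2\le e^{4|\lambda||t|}\|v_h(0)\|_{L^2}^2$. Dividing by $h^2$ and letting $h\to0$ then gives the stated inequality, once I check that $h^{-1}v_h(0)\to\d_tu_\eps(0)$ in $L^2$. For the latter I would use the Duhamel representation of \eqref{approxi}: the linear contribution $h^{-1}(e^{-ih(-\Delta)^s}-I)\varphi$ converges to $-i(-\Delta)^s\varphi$ in $L^2$ by Stone's theorem, because $\varphi$ lies in the operator domain $H^{2s}$, and the Duhamel integral contributes $-i\lambda g_\eps(\varphi)$ thanks to the $L^2$-continuity of $\tau\mapsto g_\eps(u_\eps(\tau))$ recorded in Lemma~\ref{lem:exist-approx}. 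The bound for general $t$ then transfers to $\d_tu_\eps(t)$ by weak-$*$ compactness of the bounded family $\{h^{-1}v_h(t)\}$ and weak lower semicontinuity of the $L^2$ norm.

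For the second assertion, I would evaluate the equation at $t=0$, obtaining $\d_tu_\eps(0)=-i((-\Delta)^s\varphi+\lambda g_\eps(\varphi))$, so that $\|\d_tu_\eps(0)\|_{L^2}\le\|\varphi\|_{H^{2s}}+|\lambda|\,\|g_\eps(\varphi)\|_{L^2}$ and everything reduces to bounding $\|g_\eps(\varphi)\|_{L^2}=\|2\varphi\log(|\varphi|+\eps)\|_{L^2}$ uniformly in $\eps\in(0,1)$. As in \eqref{ineq:log_roughly}, for any $\delta\in(0,1)$ one has, uniformly in $\eps\in(0,1)$, the bound $|z\log(|z|+\eps)|\lesssim|z|^{1-\delta}+|z|^{1+\delta}$, using $|\log(|z|+\eps)|\le|\log|z||\lesssim|z|^{-\delta}$ on $\{|z|<1\}$ and $\log(|z|+\eps)\lesssim|z|^\delta$ on $\{|z|\ge1\}$. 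The superlinear part is harmless: $\||\varphi|^{1+\delta}\|_{L^2}^2=\|\varphi\|_{L^{2+2\delta}}^{2+2\delta}\lesssim\|\varphi\|_{H^s}^{2+2\delta}\le\|\varphi\|_{H^{2s}}^{2+2\delta}$ by Sobolev embedding for $\delta$ small. The sublinear part $\||\varphi|^{1-\delta}\|_{L^2}^2=\|\varphi\|_{L^{2-2\delta}}^{2-2\delta}$ is where I would invoke the finite momentum: writing $|\varphi|^{2-2\delta}=(\<x\>^\alpha|\varphi|)^{2-2\delta}\<x\>^{-\alpha(2-2\delta)}$ and applying Hölder with exponents $\frac{1}{1-\delta}$ and $\frac1\delta$ gives $\|\varphi\|_{L^{2-2\delta}}^{2-2\delta}\lesssim\|\<x\>^\alpha\varphi\|_{L^2}^{2-2\delta}$, provided $\<x\>^{-\alpha(2-2\delta)/\delta}\in L^1(\R^d)$, that is $\delta<\frac{2\alpha}{d+2\alpha}$. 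Choosing $\delta>0$ small enough to satisfy both this and the Sobolev constraint yields $\|g_\eps(\varphi)\|_{L^2}\le C(\|\varphi\|_{H^{2s}},\|\<x\>^\alpha\varphi\|_{L^2})$, uniformly in $\eps$, and hence the claim.

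The hard part will be exactly this sublinear region $\{|\varphi|<1\}$: there the logarithm is unbounded below, and $\varphi\in H^{2s}$ by itself does not place $\varphi\log(|\varphi|+\eps)$ in $L^2$, since the Sobolev embedding only tames large amplitudes. The finite momentum hypothesis $\varphi\in\F(H^\alpha)$ furnishes precisely the spatial decay needed to gain the extra integrability $\varphi\in L^{2-2\delta}$, and it is this gain that makes $\d_tu_\eps(0)$ bounded in $L^2$ uniformly in $\eps$; this is the reason the assumption is built into the hypotheses of Theorem~\ref{theo:X}.
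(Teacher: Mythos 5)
Your proof is correct, and its two halves relate to the paper's differently. The second half is essentially the paper's own argument: evaluate \eqref{approxi} at $t=0$, split $\varphi\log(|\varphi|+\eps)$ into the sublinear piece $|\varphi|^{1-\delta}$ and a superlinear piece, control the former by $\|\<x\>^\alpha\varphi\|_{L^2}^{1-\delta}$ using the momentum weight and the latter by Sobolev embedding; you additionally supply the H\"older exponents and the explicit constraint $\delta<2\alpha/(d+2\alpha)$ that the paper compresses into ``provided $\delta>0$ is sufficiently small''. The first half, however, takes a genuinely different route. The paper differentiates the equation in time: in $\d_t\(2\lambda u_\eps\log(|u_\eps|+\eps)\)$ the term $2\lambda\,\d_t u_\eps\log(|u_\eps|+\eps)$ pairs to something real against $\d_t u_\eps$ (real multiplier), so only $-4\lambda\IM\(\frac{u_\eps}{|u_\eps|+\eps}\d_t|u_\eps|,\d_t u_\eps\)$ survives, giving $\frac{d}{dt}\|\d_t u_\eps\|_{L^2}^2\le 4|\lambda|\|\d_t u_\eps\|_{L^2}^2$ directly. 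That computation is short but formal: it presupposes $\d_t^2 u_\eps$ and time-differentiability of the nonlinearity. Your difference-quotient version uses only the regularity $u_\eps\in C(\R,H^s)\cap C^1(\R,H^{-s})$ from Lemma~\ref{lem:exist-approx}, reuses Lemma~\ref{log} with $\mu=\eps$ (the same device the paper employs for its $L^2$ contraction estimates) to get the identical constant $4|\lambda|$, and identifies $h^{-1}(u_\eps(h)-\varphi)\to\d_t u_\eps(0)$ in $L^2$ via Duhamel and Stone's theorem, which is legitimate precisely because $\varphi\in H^{2s}$ lies in the operator domain and $g_\eps:H^s\to L^2$ is locally Lipschitz for fixed $\eps$ (note this last fact sits in the proof of Lemma~\ref{lem:exist-approx}, not its statement). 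The weak-compactness transfer to general $t$ is also sound, since $h^{-1}(u_\eps(t+h)-u_\eps(t))\to\d_t u_\eps(t)$ already in $H^{-s}$, which pins down the weak $L^2$ limit. In short, your route buys rigor — it justifies the paper's formal differentiation — at the cost of length; the paper's buys brevity and the structural observation that the dangerous factor $\log(|u_\eps|+\eps)$ drops out because it is real.

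One cosmetic repair: your pointwise claim $|\log(|z|+\eps)|\le|\log|z||$ on $\{|z|<1\}$ fails when $|z|<1\le|z|+\eps$; but in that regime $0\le\log(|z|+\eps)\le\log 2$, so the uniform bound $|z\log(|z|+\eps)|\lesssim|z|^{1-\delta}+|z|^{1+\delta}$ for $\eps\in(0,1)$ survives, and nothing downstream is affected.
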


\begin{proof}
  For the first part of the lemma, we compute
 \begin{align*}
\frac{d}{d t}\left\|\d_{t} u_{\eps}\right\|_{L^{2}}^{2} & =2
    \RE\( \d_{t}^{2}  u_{\eps},  \d_{t} u_{\eps}\) \\ 
& =-2 \IM\(\d_{t}\left\{(-\Delta)^s u_{\eps}+2 \lambda u_{\eps} \log \(\left|u_{\eps}\right|+\eps\)\right\}, \d_{t} u_{\eps}\) \\
& =-4 \lambda \IM\(\frac{u_{\eps}}{\left|u_{\eps}\right|+\eps} \d_{t}\left|u_{\eps}\right|, \d_{t} u_{\eps}\) \leq 4|\lambda|\|\d_t u_\eps (t)\|^2_{L^2},
\end{align*}
hence the announced inequality by Gronwall Lemma. 
Now in view of \eqref{approxi},
\begin{align*}
  \|\d_{t} u_{\eps}(0)\|_{L^{2}}&\le \|(-\Delta)^s
  u_{\eps}(0)\|_{L^{2}}+2|\lambda| \left\|
      u_\eps(0)\log(|u_\eps(0)|+\eps)\right\|_{L^{2}} \\
  &\le \|\varphi\|_{  H^{2s}} + +2|\lambda| \left\| 
  \varphi\log(|\varphi|+\eps)\right\|_{L^{2}} .
\end{align*}
For $\delta>0$,
\begin{align*}
  \left| \varphi\log(|\varphi|+\eps)\right|&\lesssim |\varphi|\(
  (|\varphi|+\eps)^{-\delta} + (|\varphi|+\eps)^{\delta}\) \lesssim
      |\varphi|^{1-\delta} +|\varphi| (|\varphi|^{\delta}+1) ,
\end{align*}
and, provided that $\delta>0$ is sufficiently small (in terms of $s$
and $\alpha$),
\begin{equation*}
  \left\|    |\varphi|^{1-\delta}\right\|_{L^2}\lesssim \left\|
    \<x\>^\alpha \varphi\right\|_{L^2}^{1-\delta} ,\quad \left\| \varphi
      (|\varphi|^{\delta}+1)  \right\|_{L^2}\lesssim
    \|\varphi\|_{H^{2s}}^{1+\delta} + |\varphi\|_{L^2},
  \end{equation*}
  hence the lemma. 
\end{proof}
Combined with \eqref{MT},
\begin{equation}\label{NT}
    N_{T}:=\sup _{\eps \in(0,1)}\(\left\|u_{\eps}\right\|_{C_{T}\(H^{s}\)}+\left\|\d_{t} u_{\eps}\right\|_{C_{T}\(L^{2}\)}\) \leq C\(T,\|\varphi\|_{X_\alpha^{2s}}\).
\end{equation}

The unique solution $u \in C\(\mathbb{R}, H^{s}\(\mathbb{R}^{d}\)\)$
to \eqref{eq:logNLS} was constructed in Section~\ref{sec:Hs}, obtained
as the limit of $u_\eps$ as $\eps\to 0$, and we deduce from \eqref{NT} that
\begin{equation*}
  u \in W_{\mathrm{loc}}^{1, \infty}\(\mathbb{R},
  L^{2}\(\mathbb{R}^{d}\)\), \quad \d_{t} u_{\eps}(t) \rightharpoonup
  \d_{t} u(t) \quad \text { in } L^{2}\(\mathbb{R}^{d}\). 
\end{equation*}
As announced above, the next step consists in showing that
$u\log|u|^2$ belongs to $L^\infty_{\rm loc}(\R,L^2)$. Using the same
estimates as in the proof of Lemma~\ref{lem:dtu}, it suffices to prove
the following result:
\begin{lemma}\label{lem:X}
  Let $0<s<1$, $0<\alpha<2s$ with $\alpha\le 1$, and $\varphi\in
  X_\alpha^{2s}$. Then the solution $u\in C(\R,H^s)$ provided by
  Theorem~\ref{theo:Hs} also belongs to $C_w\cap L^\infty_{\rm
  loc}(\R ,\F(H^\alpha))$.
\end{lemma}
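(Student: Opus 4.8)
The plan is to control the weighted norm $\|\<x\>^\alpha u_\eps(t)\|_{L^2}$ uniformly in $\eps\in(0,1)$ and locally uniformly in $t$, then pass to the weak limit. The natural quantity to differentiate in time is $\|\<x\>^\alpha u_\eps(t)\|_{L^2}^2$. Multiplying \eqref{approxi} by $\<x\>^{2\alpha}\bar u_\eps$, integrating, and taking imaginary parts, the nonlinear term $2\lambda\,\IM\(\<x\>^{2\alpha}u_\eps\log(|u_\eps|+\eps),u_\eps\)_{L^2}$ vanishes because $u_\eps\log(|u_\eps|+\eps)$ is a real multiple of $u_\eps$, so only the fractional-Laplacian contribution survives:
\begin{equation*}
  \frac{d}{dt}\|\<x\>^\alpha u_\eps(t)\|_{L^2}^2 = -2\,\IM\((-\Delta)^s u_\eps,\<x\>^{2\alpha}u_\eps\)_{L^2}.
\end{equation*}
By self-adjointness of $(-\Delta)^s$, the quantity $\IM\((-\Delta)^s u_\eps,\<x\>^{2\alpha}u_\eps\)_{L^2}$ equals $\IM\(u_\eps,[(-\Delta)^s,\<x\>^{2\alpha}]u_\eps\)_{L^2}$ up to a real (hence imaginary-part-free) term, so the evolution is driven entirely by the commutator. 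This is exactly where Lemma~\ref{lem:commutator} enters.

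First I would write $\<x\>^{2\alpha}=\<x\>^\alpha\cdot\<x\>^\alpha$ (or work directly with the exponent $2\alpha$ when $2\alpha\le 1$, and otherwise iterate), and invoke Lemma~\ref{lem:commutator} to get $[(-\Delta)^s,\<x\>^\alpha]:H^s\to L^2$ continuously. The aim is the differential inequality
\begin{equation*}
  \frac{d}{dt}\|\<x\>^\alpha u_\eps(t)\|_{L^2}^2 \le C\(\|u_\eps(t)\|_{H^s}\)\(1+\|\<x\>^\alpha u_\eps(t)\|_{L^2}\)\|u_\eps(t)\|_{H^s},
\end{equation*}
where the commutator estimate converts one factor $\<x\>^\alpha u_\eps$ into an $H^s$-bounded quantity. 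Since $\|u_\eps(t)\|_{H^s}\le M_T$ by \eqref{MT}, Gronwall's Lemma then yields a bound on $\|\<x\>^\alpha u_\eps(t)\|_{L^2}$ that is uniform in $\eps$ and locally uniform in $t$, controlled by $\|\varphi\|_{\F(H^\alpha)}$ and $M_T$; that is, $\sup_{0<\eps<1}\|u_\eps\|_{L^\infty_T(\F(H^\alpha))}<\infty$ for every $T>0$.

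With this uniform bound in hand, I would extract weak limits: for each $t$, $\<x\>^\alpha u_\eps(t)$ is bounded in $L^2$, and since $u_\eps(t)\to u(t)$ in $L^2_{\mathrm{loc}}$ (from the $H^s$ construction of Section~\ref{sec:Hs}), the weak-$L^2$ limit of $\<x\>^\alpha u_\eps(t)$ must be $\<x\>^\alpha u(t)$. Weak lower semicontinuity of the norm then transfers the uniform bound to $u$, giving $u\in L^\infty_{\rm loc}(\R,\F(H^\alpha))$. Weak continuity in time, $u\in C_w(\R,\F(H^\alpha))$, follows from the weak-$L^2$ continuity of $t\mapsto u(t)$ combined with the locally uniform bound, by a standard density-and-boundedness argument. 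The main obstacle is the commutator term: one must verify that $2\alpha$ (not merely $\alpha$) still satisfies the hypotheses of Lemma~\ref{lem:commutator}, which it need not when $2\alpha>2s$ or $2\alpha>1$; the clean way around this is to pair the operator with $\<x\>^\alpha u_\eps$ on both sides rather than putting the full weight $\<x\>^{2\alpha}$ on one factor, so that only the commutator $[(-\Delta)^s,\<x\>^\alpha]$—with exponent $\alpha$, for which the hypotheses $\alpha<2s$, $\alpha\le 1$ hold by assumption—ever appears. Keeping track of this splitting, and ensuring the leftover symmetric terms contribute only to the real part (and hence drop out under $\IM$), is the delicate bookkeeping at the heart of the proof.
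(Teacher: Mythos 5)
Your proposal is correct and is essentially the paper's own argument: the paper multiplies \eqref{approxi} by $\<x\>^\alpha$, writes $\<x\>^\alpha(-\Delta)^s u_\eps=(-\Delta)^s\(\<x\>^\alpha u_\eps\)-[(-\Delta)^s,\<x\>^\alpha]u_\eps$, and then pairs with $\<x\>^\alpha\bar u_\eps$, which is precisely your symmetric splitting that keeps only the exponent-$\alpha$ commutator while the self-adjoint term $\((-\Delta)^s\(\<x\>^\alpha u_\eps\),\<x\>^\alpha u_\eps\)_{L^2}$ drops under $\IM$. The commutator is then bounded via Lemma~\ref{lem:commutator} and \eqref{MT}, and Gronwall together with the weak-limit arguments of Section~\ref{sec:Hs} concludes, exactly as you describe.
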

\begin{proof}
  Let $\eps>0$: multiplying \eqref{approxi} by $\<x\>^\alpha$, we find
   \begin{equation*}
        i \d_{t} (\<x\>^\alpha u_\eps)-\<x\>^\alpha(-\Delta)^s  u_\eps=2\lambda \<x\>^\alpha u_\eps \log {\(|u_\eps|+\eps\)} ,
 \end{equation*}
which can be rewritten as
    \begin{equation*}
        i \d_{t} (\<x\>^\alpha u_\eps)-(-\Delta)^s\(\<x\>^\alpha
        u_\eps\)=2\lambda \<x\>^\alpha u_\eps \log {\(|u_\eps|+\eps\)}
        -[(-\Delta)^s,\<x\>^\alpha]u_\eps. 
      \end{equation*}
  Multiplying the above equation by $\<x\>^\alpha \bar {u_\eps}$, integrating over $\mathbb R^d$ and taking the imaginary part, we
obtain, since $(-\Delta)^s$ is self-adjoint,
\begin{equation*}
  \frac{d}{d t}\left\|\<x\>^\alpha u_{\eps}\right\|_{L^{2}}^2
  \le 2 \left\| \<x\>^\alpha u_{\eps}\right\|_{L^{2}}\left\|
  [(-\Delta)^s,  \<x\>^\alpha] u_{\eps}\right\|_{L^{2}} .
\end{equation*}
The last factor is estimated thanks to Lemma~\ref{lem:commutator}:
for $T>0$ and $t\in [-T,T]$,
\begin{equation*}
  \left\|
  [(-\Delta)^s,  \<x\>^\alpha] u_{\eps}(t)\right\|_{L^{2}} \lesssim
\|u_{\eps}(t)\|_{H^s}\lesssim M_T\lesssim N_T.
\end{equation*}
Gronwall Lemma implies that $u_\eps$ is uniformly bounded in
$L^\infty_T \F(H^\alpha)$, 
and the lemma follows by the same arguments as in
Section~\ref{sec:Hs}. 
\end{proof}

As explained above, we conclude that $(-\Delta)^s u\in C_w\cap
L^\infty_{\rm loc}(\R,L^2)$, and Theorem~\ref{theo:X} follows, keeping
Lemma~\ref{lem:X} in mind.

\bibliographystyle{abbrv}
\bibliography{biblio}

\begin{thebibliography}{10}

\bibitem{AnYang2023}
X.~An and X.~Yang.
\newblock Convergence from power-law to logarithm-law in nonlinear fractional
  {S}chr\"{o}dinger equations.
\newblock {\em J. Math. Phys.}, 64(1):Paper No. 011506, 13, 2023.

\bibitem{applebaum2004levy}
D.~Applebaum.
\newblock L{\'e}vy processes-from probability to finance and quantum groups.
\newblock {\em Notices of the AMS}, 51(11):1336--1347, 2004.

\bibitem{Ardila2017}
A.~H. Ardila.
\newblock Existence and stability of standing waves for nonlinear fractional
  {S}chr{\"o}dinger equation with logarithmic nonlinearity.
\newblock {\em Nonlinear Anal., Theory Methods Appl., Ser. A, Theory Methods},
  155:52--64, 2017.

\bibitem{bhattarai2016existence}
S.~Bhattarai.
\newblock Existence and stability of standing waves for nonlinear
  {S}chr\"odinger systems involving the fractional {L}aplacian.
\newblock {\em arXiv preprint arXiv:1604.01718}, 2016.

\bibitem{BiMy76}
I.~Bia{\l}ynicki-Birula and J.~Mycielski.
\newblock Nonlinear wave mechanics.
\newblock {\em Ann. Physics}, 100(1-2):62--93, 1976.

\bibitem{cabre2014nonlinear}
X.~Cabr{\'e} and Y.~Sire.
\newblock Nonlinear equations for fractional {Laplacians}. {I}: {Regularity},
  maximum principles, and {Hamiltonian} estimates.
\newblock {\em Ann. Inst. Henri Poincar{\'e}, Anal. Non Lin{\'e}aire},
  31(1):23--53, 2014.

\bibitem{CaGa18}
R.~Carles and I.~Gallagher.
\newblock Universal dynamics for the defocusing logarithmic {S}chr{\"o}dinger
  equation.
\newblock {\em Duke Math. J.}, 167(9):1761--1801, 2018.

\bibitem{carles2023low}
R.~Carles, M.~Hayashi, and T.~Ozawa.
\newblock Low regularity solutions to the logarithmic {S}chr\"odinger equation.
\newblock {\em Pure Appl. Anal.}, 2024.
\newblock To appear. Archived at \url{https://arxiv.org/abs/2311.01801}.

\bibitem{Caz83}
T.~Cazenave.
\newblock Stable solutions of the logarithmic {S}chr\"odinger equation.
\newblock {\em Nonlinear Anal.}, 7(10):1127--1140, 1983.

\bibitem{CazCourant}
T.~Cazenave.
\newblock {\em Semilinear {S}chr\"odinger equations}, volume~10 of {\em Courant
  Lecture Notes in Mathematics}.
\newblock New York University Courant Institute of Mathematical Sciences, New
  York, 2003.

\bibitem{CaHa80}
T.~Cazenave and A.~Haraux.
\newblock \'{E}quations d'\'evolution avec non lin\'earit\'e logarithmique.
\newblock {\em Ann. Fac. Sci. Toulouse Math. (5)}, 2(1):21--51, 1980.

\bibitem{cho2013cauchy}
Y.~Cho, H.~Hajaiej, G.~Hwang, and T.~Ozawa.
\newblock On the {C}auchy problem of fractional {S}chr{\"o}dinger equation with
  {H}artree type nonlinearity.
\newblock {\em Funkcialaj Ekvacioj}, 56(2):193--224, 2013.

\bibitem{CHHO14}
Y.~Cho, H.~Hajaiej, G.~Hwang, and T.~Ozawa.
\newblock On the orbital stability of fractional {S}chr\"{o}dinger equations.
\newblock {\em Commun. Pure Appl. Anal.}, 13(3):1267--1282, 2014.

\bibitem{COX11}
Y.~Cho, T.~Ozawa, and S.~Xia.
\newblock Remarks on some dispersive estimates.
\newblock {\em Commun. Pure Appl. Anal.}, 10(4):1121--1128, 2011.

\bibitem{d2015fractional}
P.~d'Avenia, M.~Squassina, and M.~Zenari.
\newblock Fractional logarithmic {S}chr{\"o}dinger equations.
\newblock {\em Mathematical Methods in the Applied Sciences},
  38(18):5207--5216, 2015.

\bibitem{david2004levy}
A.~David.
\newblock Levy processes and stochastic calculus.
\newblock {\em Cambridge Studies in Advanced Mathematics, Cambridge University
  Press, Cambridge}, 2004.

\bibitem{Hitch2012}
E.~Di~Nezza, G.~Palatucci, and E.~Valdinoci.
\newblock Hitchhiker's guide to the fractional {S}obolev spaces.
\newblock {\em Bull. Sci. Math.}, 136(5):521--573, 2012.

\bibitem{Dinh2018}
V.~D. Dinh.
\newblock Well-posedness of nonlinear fractional {S}chr\"odinger and wave
  equations in {S}obolev spaces.
\newblock {\em Int. J. Appl. Math.}, 31:483---525, 2018.

\bibitem{guo2008existence}
B.~Guo, Y.~Han, and J.~Xin.
\newblock Existence of the global smooth solution to the period boundary value
  problem of fractional nonlinear {S}chr{\"o}dinger equation.
\newblock {\em Applied Mathematics and Computation}, 204(1):468--477, 2008.

\bibitem{guo2012existence}
B.~Guo and D.~Huang.
\newblock Existence and stability of standing waves for nonlinear fractional
  {S}chr{\"o}dinger equations.
\newblock {\em Journal of Mathematical Physics}, 53(8), 2012.

\bibitem{guo2010global}
B.~Guo and Z.~Huo.
\newblock Global well-posedness for the fractional nonlinear {S}chr{\"o}dinger
  equation.
\newblock {\em Communications in Partial Differential Equations},
  36(2):247--255, 2010.

\bibitem{guo2006some}
X.~Guo and M.~Xu.
\newblock Some physical applications of fractional {S}chr{\"o}dinger equation.
\newblock {\em Journal of mathematical physics}, 47(8), 2006.

\bibitem{GuoWang2014}
Z.~Guo and Y.~Wang.
\newblock Improved {S}trichartz estimates for a class of dispersive equations
  in the radial case and their applications to nonlinear {S}chr\"{o}dinger and
  wave equations.
\newblock {\em J. Anal. Math.}, 124:1--38, 2014.

\bibitem{HO}
M.~Hayashi and T.~Ozawa.
\newblock The {C}auchy problem for the logarithmic {S}chr\"odinger equation
  revisited.
\newblock Preprint,
  \href{https://arxiv.org/abs/2309.01695}{https://arxiv.org/abs/2309.01695},
  2023.

\bibitem{hefter1985application}
E.~F. Hefter.
\newblock Application of the nonlinear {S}chr{\"o}dinger equation with a
  logarithmic inhomogeneous term to nuclear physics.
\newblock {\em Physical Review A}, 32(2):1201, 1985.

\bibitem{HongSire2015}
Y.~Hong and Y.~Sire.
\newblock On fractional {S}chr\"{o}dinger equations in {S}obolev spaces.
\newblock {\em Commun. Pure Appl. Anal.}, 14(6):2265--2282, 2015.

\bibitem{Kato87}
T.~Kato.
\newblock On nonlinear {S}chr\"odinger equations.
\newblock {\em Ann. IHP (Phys. Th\'eor.)}, 46(1):113--129, 1987.

\bibitem{Laskin2000}
N.~Laskin.
\newblock Fractional quantum mechanics and {L}\'{e}vy path integrals.
\newblock {\em Phys. Lett. A}, 268(4-6):298--305, 2000.

\bibitem{Laskin2002}
N.~Laskin.
\newblock Fractional {S}chr\"{o}dinger equation.
\newblock {\em Phys. Rev. E (3)}, 66(5):056108, 7, 2002.

\bibitem{Li19}
D.~Li.
\newblock On {K}ato-{P}once and fractional {L}eibniz.
\newblock {\em Rev. Mat. Iberoam.}, 35(1):23--100, 2019.

\bibitem{WangZhang2019}
Z.-Q. Wang and C.~Zhang.
\newblock Convergence from power-law to logarithm-law in nonlinear scalar field
  equations.
\newblock {\em Arch. Ration. Mech. Anal.}, 231(1):45--61, 2019.

\bibitem{ZHANG2018}
H.~Zhang and Q.~Hu.
\newblock Existence of the global solution for fractional logarithmic
  {Schr{\"o}dinger} equation.
\newblock {\em Comput. Math. Appl.}, 75(1):161--169, 2018.

\bibitem{Zlo10}
K.~G. Zloshchastiev.
\newblock Logarithmic nonlinearity in theories of quantum gravity: {O}rigin of
  time and observational consequences.
\newblock {\em Grav. Cosmol.}, 16:288--297, 2010.

\end{thebibliography}
\end{document}